\numberwithin{equation}{section}
\theoremstyle{plain}
\newtheorem{thm}{Theorem}[section]
\newtheorem{lem}[thm]{Lemma}
\newtheorem{Def}[thm]{Definition}
\newtheorem{prop}[thm]{Proposition}
\newtheorem{conj}[thm]{Conjecture}
\newtheorem*{claim*}{Claim}
\newtheorem*{thm*}{Theorem}
\def\R{\mathbb{R}}
\def\Z{\mathbb{Z}}
\def\N{\mathcal{N}}
\def\M{\mathcal{M}}
\def\V{\mathcal{V}}
\def\dH{\dim_{\mathcal{H}}}
\def\dF{\dim_{\mathcal{F}}}
\newcommand{\supp}{\operatorname{supp}}
\newcommand{\dist}{\operatorname{dist}}
\newcommand{\esssup}{\operatornamewithlimits{ess\,sup}}
\newcommand{\essinf}{\operatornamewithlimits{ess\,inf}}
\begin{document}

\title[Dimension of Diophantine approximation and applications]{Dimension of Diophantine approximation and some applications in harmonic analysis}
\address{Department of Mathematics \& International Center for Mathematics, Southern
University of Science and Technology, Shenzhen, 518055, PR China}
\author{Longhui Li}
\email{12231267@mail.sustech.edu.cn}
\author{Bochen Liu}
\email{Bochen.Liu1989@gmail.com}

\begin{abstract}
    In this paper we construct a new family of sets based on Diophantine approximation in the Euclidean space, and consider their applications in several problems in harmonic analysis. 
 
    Our first application is on the Hausdorff dimension of our sets. We show a recent result of Ren and Wang on the ABC sum-product problem is sharp. Higher dimensional cases and the relation to orthogonal projections are also discussed. Some conjectures are proposed.
 
    In addition to Hausdorff dimension, we also consider Fourier dimension. For every $0\leq t\leq s\leq 1$, we are able to construct a subset of $\mathbb{R}$ that has Hausdorff dimension $s$ and Fourier dimension $t$, together with a measure $\mu$ that captures both dimensions, i.e.,
    $$\mu(B(x,r))\lesssim_\epsilon r^{s-\epsilon}
     \ \text{and} \ |\hat{\mu}(\xi)|\lesssim_\epsilon |\xi|^{-t/2 +\epsilon}, \ \forall\,\epsilon>0.$$
    It is fundamental but the very first such result in the literature.

    Our last result is to provide a viewpoint of the sharpness of Fourier restriction over general measures from dimensions of sets and measures.
\end{abstract}

\maketitle

\section{Introduction}
\subsection{Hausdorff dimension of Diophantine approximation}

Denote $\|x\|:=\dist(x,\Z)$ for $x\in\R$ and consider the Diophantine approximation
\begin{equation}
	\label{Jarnik}
	\{x\in\R: \|qx\|\leq q^{1-\alpha}\ \text{for infinitely many integers }q\}
\end{equation}
When $\alpha=2$, it is well known that it contains all real numbers. When $\alpha>2$, it is a real analysis exercise that this set has Lebesgue measure zero. In 1931, Jarnik \cite{Jar31} proved that the Hausdorff dimension of \eqref{Jarnik} equals $\min\{2/\alpha, 1\}$. A proof in English was given by  Besicovitch \cite{Bes34} in 1934. There is a large body of literature on Diophantine approximation from different aspects. 

In 1975, Kaufman and Mattila \cite{KM75} pointed out that the method of Jarnik implies the following uniform version: suppose $\{q_i\}_{i=1}^\infty$ is a rapidly increasing integer sequence, then the set
	\begin{equation}\label{KM}
 \begin{aligned}
     \bigcap_i\{x\in\R^d: \|Hx\|\leq Hq_i^{-\alpha}\ \text{for some integer }i\leq H\leq q_i\}
 \\=\bigcap_i\bigcup_{i\leq H\leq q_i}\N_{q_i^{-\alpha}}\left(\frac{\Z^d}{H}\right)
 \end{aligned}
\end{equation}
has Hausdorff dimension $\min\{(d+1)/\alpha, d\}$. Here and throughout this paper, $\|x\|:=\dist(x,\Z^d)$ for $x\in\R^d$, and $\N_\delta(\cdot)$ denotes the $\delta$-neighborhood of a set.

In the literature there is another construction that also appears  quite often: let $\{q_i\}$ be a rapidly increasing sequence, then the set
\begin{equation}
	\label{simple-example}
	\bigcap_i\{x\in\R^d: \|q_ix\|\leq q_i^{1-\alpha}\}=\bigcap_i\N_{q_i^{-\alpha}}\left(\frac{\Z^d}{q_i}\right)
\end{equation}
has Hausdorff dimension $\min\{d/\alpha, d\}$. For a proof we refer to Example 4.7 in \cite{Fal14}. Notice that \eqref{simple-example} has smaller dimension than \eqref{KM} for the same $\alpha$, as the former one has more flexibility on $H$.

Nowadays sets of type \eqref{Jarnik} are called limsup sets and \eqref{KM}, \eqref{simple-example} are called liminf sets. Unlike the case of limsup, there seems not much discussion on liminf sets (see, e.g. \cite{HS24}\cite{HW24}).

For a long time, \eqref{KM} and \eqref{simple-example} have been seen related but treated separately. They are considered as extremal cases for different problems in geometric measure theory. For instance, \eqref{simple-example} is used to propose the Falconer distance conjecture \cite{Fal85}, and the role of \eqref{KM} in \cite{KM75} is to construct examples on orthogonal projections. It seems their relation was never seriously discussed. In this paper we find they are actually endpoints of a family of sets.
\begin{thm}\label{thm-our-example}
Suppose $\gamma,\beta_1,\dots,\beta_d\geq 0$ and $0<\gamma+\beta_j<1$ for all $j$. Then there exists an increasing $\{q_i\}$ in $\R_+$ such that the set
	$$\begin{aligned}
	    \bigcap_i\{x\in\R^d: \|Hq_i^{\beta_j}x_j\|\leq Hq_i^{\beta_j-1}\, \text{for some integer }1\leq H\leq q_i^\gamma,  \,\forall\,j\}\\=\bigcap_i\bigcup_{1\leq H\leq q_i^\gamma}\N_{q_i^{-1}}\left(\frac{\Z}{Hq_i^{\beta_1}}\times\cdots\times\frac{\Z}{Hq_i^{\beta_d}}\right)
	\end{aligned}$$
	has Hausdorff dimension 
 $$\min\{(d+1)\gamma+\sum_{j=1}^d\beta_j,d\}.$$
 In particular it is sufficient to take $\{q_i\}$ as an increasing sequence in $(1,\infty)$ satisfying 
         \begin{equation}\label{how-fast}q_i>\max\{ q_{i-1}^{10di}, q_{i-1}^{\frac{1}{\gamma+\beta_j}}, 1\leq j\leq d \}.\end{equation}     
\end{thm}

The assumption $0<\gamma+\beta_j<1$ is natural: when $\gamma=\beta_j=0$, the $j$th coordinate only contains points in $\Z$; when $\gamma+\beta_j\geq 1$, by taking $H=q_i^\gamma$ one can see that every point in $\R$ satisfies the condition for the $j$th coordinate. In either case, it becomes a problem in $\R^{d-1}$. 

One can easily check that the construction in Theorem \ref{thm-our-example} is equivalent to \eqref{KM} when all $\beta_j$ vanish; it is equivalent to \eqref{simple-example} when $\gamma=0$ and all $\beta_j$s are equal.  Here we present our sets in a slightly different way from the classics to make this interpolation look natural. In fact, this is how we find it out.

\subsection{Orthogonal projection and sum-product}

\subsubsection{In the plane} Our first application is on orthogonal projections. For simplicity, we only introduce the history of the planar version and refer to \cite{Mat15} for all classical results. As mentioned in the previous subsection, Kaufman and Mattila considered \eqref{KM} because of orthogonal projections. More precisely, for $x\in\R^2$ and $e\in S^1$, let $\pi_e(x)=x\cdot e$ denote its orthogonal projection. In \cite{KM75}, for all $s\in(0,2)$, $t\in(0,1)$, Kaufman and Mattila construct Borel sets $E\subset\R^2$, $\dH E=s$, and $\Omega\subset S^1$, $\dH \Omega =t$, such that
\begin{equation}\label{sharp-orthogonal-proj-plane}
	\dH\pi_e(E)\leq \min\{\frac{s+t}{2}, s, 1\}, \ \forall\, e\in\Omega.
\end{equation}
In their argument $E\subset\R^2$ is taken as \eqref{simple-example} and $\Omega\subset S^1$ is determined by $(1, A)$ with $A\subset\R$ taken as \eqref{KM}.

On the other direction, for arbitrary Borel sets $E\subset\R^2$ and $\Omega\subset S^1$, it has been known for a while that there must exist $e\in\Omega$ such that
$$\dH\pi_e(E)\geq \begin{cases}1, & \text{if }\dH E+\dH\Omega> 2 \text{ \cite{Fal82}} \\ \dH E& \text{if }\dH\Omega>\dH E \text{ \cite{Kau68}}\end{cases},$$
and they are optimal due to \eqref{sharp-orthogonal-proj-plane}. For the remaining case, although people believe \eqref{sharp-orthogonal-proj-plane} should also be sharp, it was open for nearly half a century. More precisely, when $\dH E+\dH\Omega\leq 2$ and $0<\dH\Omega\leq\dH E$, there should exist $e\in\Omega$ such that
$$\dH\pi_e(E)\geq\frac{\dH E+\dH \Omega}{2}.$$
Finally, with the help of recent fast development in geometry measure theory and harmonic analysis, this is confirmed by Orponen-Shmerkin \cite{OS23-ABC} for the Ahlfors-David regular case and then fully solved by Ren-Wang \cite{RW23} for the general case. For more details we refer to their papers and references therein. 

Among all recent breakthroughs, a key point is the following ABC sum-product problem raised by Orponen in \cite{Orponen-ABC}: suppose $A, B, C\subset\R$ are Borel sets, $\dH A<1$, and 
\begin{equation}\label{C>A-B}\dH C>\dH A-\dH B\geq 0,\end{equation}
 then there should exist $c\in C$ such that
$$\dH (A+cB)>\dH A.$$
Here the dimensional threshold $\dH C>\dH A-\dH B\geq 0$ is necessary, by taking $A,B,C$ as \eqref{simple-example}. One proof of this problem was later given by Orponen and Shmerkin \cite{OS23-ABC}. 

We observe that, by treating $A+cB=\pi_{(1,c)}(A\times B)$ and applying the result of Ren-Wang on orthogonal projections, a more precise estimate on the $ABC$ sum-product problem follows, that is, under condition \eqref{C>A-B}, there exists $c\in C$ such that
\begin{equation}\label{eq-sharp-ABC}
\begin{aligned}
    \dH (A+cB)\geq  \min\{\frac{\dH (A\times B)+\dH C}{2}, \dH (A\times B), 1\}\\ \geq  \min\{\frac{\dH A+\dH B+\dH C}{2}, \dH A+\dH B, 1\}.
    \end{aligned}
\end{equation}
Here the last line follows from the well known property $\dH(A\times B)\geq \dH A+\dH B$. See, e.g., Section 8 in \cite{Mat95}.

Then a natural question is, whether the last line in \eqref{eq-sharp-ABC} is sharp in general.  In other words whether one should expect a better dimensional exponent on $\pi_e(E)$ under the extra Cartesian product assumption. With Theorem \ref{thm-our-example} we have the following.
\begin{thm}\label{thm-sharp-ABC}
	For all $s_A,s_B,s_C\in(0,1)$, $s_C>s_A-s_B\geq 0$,  there exist Borel sets $A,B,C\subset\R$ with $\dH A=s_A, \dH B=s_B, \dH C=s_C$ such that
	$$\dH (A+cB)\leq \min\{\frac{s_A+s_B+s_C}{2}, s_A+s_B, 1\},\ \forall\,c\in C.$$ 
\end{thm}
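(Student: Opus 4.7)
The plan is to construct $A$, $B$, $C$ each as the $d=1$ case of Theorem~\ref{thm-our-example}, sharing a common rapidly increasing sequence $\{q_i\}$, with parameters tuned so that the rational approximants of $a$ and $cb$ share a common denominator. Specifically, take
$$A = \bigcap_i \N_{q_i^{-1}}\!\left(\Z/q_i^{s_A}\right),\qquad B = \bigcap_i \N_{q_i^{-1}}\!\left(\Z/q_i^{s_B}\right),$$
the $(\gamma,\beta)=(0,s_A)$ and $(0,s_B)$ instances of Theorem~\ref{thm-our-example}, so that $\dH A = s_A$ and $\dH B = s_B$. For $C$ take the mixed-type set
$$C = \bigcap_i \bigcup_{1\leq H\leq q_i^{\gamma_C}} \N_{q_i^{-1}}\!\left(\Z/(Hq_i^{\beta_C})\right),\qquad \gamma_C = \tfrac{s_C-s_A+s_B}{2},\ \beta_C = s_A-s_B.$$
The hypothesis $s_C>s_A-s_B\geq 0$ gives $\gamma_C>0$ and $\beta_C\geq 0$, and $\gamma_C+\beta_C=(s_A+s_C-s_B)/2<1$ is automatic from $s_A,s_C<1$ and $s_B>0$. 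Hence Theorem~\ref{thm-our-example} applies and $\dH C = 2\gamma_C+\beta_C=s_C$; the sequence $\{q_i\}$ is chosen to grow fast enough to meet the rate requirement of all three constructions simultaneously.

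Next, I would fix $c\in C$ and analyse $A+cB$ stage by stage. At stage $i$, any $a\in A$, $b\in B$, $c\in C$ admit integer approximants
$$a = k_A/q_i^{s_A}+O(q_i^{-1}),\quad b = k_B/q_i^{s_B}+O(q_i^{-1}),\quad c = k_C/(H_Cq_i^{\beta_C})+O(q_i^{-1}),\quad H_C\leq q_i^{\gamma_C}.$$
The point of the parameter choice is the identity $\beta_C+s_B=s_A$, which produces a common denominator,
$$a+cb = \frac{k_AH_C+k_Bk_C}{H_C\,q_i^{s_A}}+O(q_i^{-1}).$$
For fixed $c$ (hence fixed $H_C,k_C$), the integer numerator $k_AH_C+k_Bk_C$ lies in an interval of length $q_i^{s_A}H_C+q_i^{s_B}k_C=O(H_Cq_i^{s_A})$, using $k_C=O(H_Cq_i^{s_A-s_B})$ from $c=O(1)$. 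On the other hand there are only $O(q_i^{s_A+s_B})$ pairs $(k_A,k_B)$. Combining these two bounds and using $H_C\leq q_i^{\gamma_C}$ with $s_A+\gamma_C=(s_A+s_B+s_C)/2$, the covering number of $A+cB$ at scale $q_i^{-1}$ is at most
$$\min\!\bigl(q_i^{s_A+s_B},\; O(H_Cq_i^{s_A})\bigr) = O\!\bigl(q_i^{\min\{s_A+s_B,\,(s_A+s_B+s_C)/2\}}\bigr).$$
Sending $i\to\infty$ and throwing in the trivial $\dH(A+cB)\leq 1$ yields the claimed inequality uniformly in $c\in C$.

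The main obstacle is choosing the ``matching exponent'' $\beta_C=s_A-s_B$ so that the two fractions in $a+cb$ collapse onto a common denominator; once that identity is in hand the dimension bound is just the pigeonhole between the pair count $q_i^{s_A+s_B}$ and the arithmetic range $H_Cq_i^{s_A}$. The threshold hypothesis $s_C>s_A-s_B$ is precisely what forces $\gamma_C>0$, i.e.\ the genuine $\bigcup_H$-structure (the \eqref{KM}-type ingredient) that $C$ must contribute for the projection bound $(s_A+s_B+s_C)/2$ to be attained.
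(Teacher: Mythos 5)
Your proposal is correct and is essentially the same argument as the paper's: you use the identical parameter choices $\gamma_A=\gamma_B=0$, $\beta_A=s_A$, $\beta_B=s_B$, $\gamma_C=\tfrac{s_C+s_B-s_A}{2}$, $\beta_C=s_A-s_B$, and the same common-denominator observation that $A+cB$ lies in the $q_i^{-1}$-neighborhood of $\tfrac{\Z}{Hq_i^{s_A}}$ for some $H\leq q_i^{\gamma_C}$, leading to the covering count $\min\{q_i^{s_A+s_B},\,Hq_i^{s_A}\}\lesssim q_i^{\min\{s_A+s_B,\,(s_A+s_B+s_C)/2\}}$. You carry out the error-term bookkeeping and the verification of the hypotheses of Theorem~\ref{thm-our-example} a bit more explicitly than the paper, but the construction and the covering argument are the same.
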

This completes the study of the ABC sum-product problem.

\subsubsection{Higher dimensions}
Now we turn to dimension $3$ and higher, in which the orthogonal projection is denoted by $\pi_V:\R^d\rightarrow V\in G(d,n)$, where $G(d,n)$ denotes the Grassmannian of $n$-dimensional subspaces in $\R^d$. 

In higher dimensions people used to construct examples from ``embedding". For instance, in $\R^3$ one can take $E\times\{0\}$, or $E=E'\times[0,1]$, $\Omega\subset S^2$ with all lines contained in $\R^2\times\{0\}$. One can combine these two constructions to obtain examples in every dimension. We refer to \cite{KM75}\cite{Gan24} for detailed discussions. 

Although the dimensional exponents look nice, these embedded examples are essentially planar. In this paper we would like to rule these out. The most natural sets not contained in any subspace is the Cartesian product $E=A_1\times\cdots\times A_d\subset\R^d$.

For the case $n=d-1$ we have the following generalization of Theorem \ref{thm-sharp-ABC}.
\begin{thm}\label{thm-codimension-1}
	For all $t\in (0, d)$, $s_1,\dots, s_d\in(0,1)$ with $s_1=\min s_j$,
 $$t>\sum_{j=2}^d(s_j-s_1),$$
 there exist Borel sets $A_1,\dots, A_d\subset\R$ with $\dH A_i=s_i, 1\leq i\leq d$, $\mathcal{V}\subset G(d,d-1)$ with $\dH\mathcal{V}=t$ and $\mathcal{V}^\perp$ not contained in a great circle, such that for all $V\in \mathcal{V}$,
	$$\dH \pi_V(A_1\times\cdots\times A_d)\leq \min\{\frac{(d-1)\sum s_i+t}{d}, \sum s_i, d-1\}.$$
 When $t\leq\sum_{j=2}^d(s_j-s_1)$, it becomes
 $$\dH \pi_V(A_1\times\cdots\times A_d)\leq s_2+\cdots+s_d,\ \forall\, V\in \mathcal{V},$$
 which matches the trivial lower bound.
\end{thm}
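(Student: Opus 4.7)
I will imitate the strategy of Theorem~\ref{thm-sharp-ABC}. After a rotation of $\R^d$, we parametrize each $V\in\mathcal{V}\subset G(d,d-1)$ by a unit normal $e^V\propto(1,c_2,\ldots,c_d)$ with $c=(c_2,\ldots,c_d)\in\R^{d-1}$ in a bounded region. The linear map $T_c(x)=(x_j-c_jx_1)_{j=2}^d$ has kernel $\R e^V$ and is bi-Lipschitz to $\pi_V$ on bounded sets (with constants depending on $c$, which stays bounded), so
\[
\dH\,\pi_V(A_1\times\cdots\times A_d)=\dH\bigl(-c\cdot A_1+A_2\times\cdots\times A_d\bigr),
\]
a Minkowski sumset in $\R^{d-1}$. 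Thus the task reduces to constructing $A_j\subset\R$ with $\dH A_j=s_j$ and $C\subset\R^{d-1}$ with $\dH C=t$, not contained in any affine hyperplane (equivalent to $\mathcal{V}^\perp$ not in a great circle), so that the Minkowski sumset on the right has dimension at most $M:=\min\{((d-1)\sum_js_j+t)/d,\sum_js_j,d-1\}$ for every $c\in C$.

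\textbf{Construction.} Fix a single rapidly growing sequence $\{q_i\}$ and apply Theorem~\ref{thm-our-example} to build both the $A_j$'s and $C$: the one-dimensional instance furnishes each $A_j$ with parameters $(\gamma_j,\beta_j)$ satisfying $2\gamma_j+\beta_j=s_j$, and the $(d-1)$-dimensional instance furnishes $C$ with parameters $(\gamma_C,\beta_{C,2},\ldots,\beta_{C,d})$ satisfying $d\gamma_C+\sum_{j\geq 2}\beta_{C,j}=t$, chosen so that $C$ is not contained in any hyperplane of $\R^{d-1}$. The free parameters will be tuned to synchronize the approximation denominators across the $d$ coordinates; a natural symmetric choice that reflects the role of $s_1=\min_j s_j$ is $\gamma_j=s_1/2$, $\beta_j=s_j-s_1$ together with $\gamma_C=t/d$ and $\beta_{C,j}=0$.

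\textbf{Covering bound.} Fix $c\in C$ and $i$ large. Then $c_j\approx k_j/(H_Cq_i^{\beta_{C,j}})$ for a common $H_C\leq q_i^{\gamma_C}$, while each $x_j\in A_j$ lies within $O(q_i^{-1})$ of some $m_j/(H_jq_i^{\beta_j})$ with $H_j\leq q_i^{\gamma_j}$. Substituting,
\[
y_j=x_j-c_jx_1=\frac{m_jH_1H_Cq_i^{\beta_1+\beta_{C,j}}-k_jm_1H_jq_i^{\beta_j}}{H_1H_jH_Cq_i^{\beta_1+\beta_j+\beta_{C,j}}}+O(q_i^{-1}),
\]
so the projection coordinates $(y_2,\ldots,y_d)$ lie on a rational lattice whose denominators share the factor $H_C$ across every $j\in\{2,\ldots,d\}$. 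A careful count of the distinct $q_i^{-1}$-separated values of $(y_2,\ldots,y_d)$ as $(H_j,m_j)_{j=1}^d$ range over their admissible sets, summed over $H$-levels with the logarithmic losses absorbed into the rapid growth of $\{q_i\}$, should yield $N_{q_i^{-1}}(-c\cdot A_1+A_2\times\cdots\times A_d)\leq q_i^{M+o(1)}$, proving the main case. In the degenerate range $t\leq\sum_{j\geq 2}(s_j-s_1)$, the contribution of $A_1$ is no longer binding and the bound collapses to the trivial $\sum_{j\geq 2}s_j$ obtained by projecting out $A_1$.

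\textbf{Main obstacle.} The crux is the covering count above. Extracting the factor $1/d$ in front of $t$ requires simultaneously exploiting the common denominator $H_C$ appearing in all $d-1$ coordinates of $c$ \emph{and} the shared variable $x_1$ appearing in all $d-1$ projection coordinates $y_j$. These two correlations interact in a rather delicate way: a na\"ive marginal-product bound only yields an estimate of order $\sum_j s_j+\gamma_C$, which is too weak. To recover the correct dependence, the lattice-point count must be combined with a pigeonhole across $j\in\{2,\ldots,d\}$, effectively replacing sums of $H$-contributions with their max for the dominant regime, and using the common $H_C$ to amalgamate collisions. The asymmetry of the threshold $t>\sum_{j\geq 2}(s_j-s_1)$ then emerges naturally from this balance.
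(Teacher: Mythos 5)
Your reduction $\dH\pi_V(A_1\times\cdots\times A_d)=\dH\bigl(-c\cdot A_1+A_2\times\cdots\times A_d\bigr)$ is sound and matches the paper's framing, but the rest of the proposal has a genuine gap, which you yourself flag: the covering count is never executed, and the parameter choice you make does not support the count you need.

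The paper's parameters are $\gamma_{A_j}=0,\ \beta_{A_j}=s_j$ for each $A_j$ and $\gamma=\max\{(t-\sum_{j\geq2}(s_j-s_1))/d,0\},\ \beta_{C,j}=s_{j+1}-s_1$ for the direction set. Two structural features of this choice are essential and both are absent from yours. First, taking $\gamma_{A_j}=0$ makes each $A_j$ a single fixed lattice $\frac{\Z}{q_i^{s_j}}$ at scale $q_i^{-1}$; your choice $\gamma_j=s_1/2>0$ replaces this by a union over $H_j\leq q_i^{s_1/2}$ of incommensurable lattices $\frac{\Z}{H_jq_i^{s_j-s_1}}$, with independent $H_j$'s in each coordinate. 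This is a much larger and much worse-structured set, and any naive union bound over the $H_j$'s overshoots badly. Second, and more importantly, the paper's $\beta_{C,j}=s_{j+1}-s_1$ is chosen so that $c_j\approx n_{j+1}/(Hq_i^{s_{j+1}-s_1})$ has a denominator exactly compatible with $x_1\approx m_1/q_i^{s_1}$ and $x_{j+1}\approx m_{j+1}/q_i^{s_{j+1}}$: one then gets $c_jx_1\approx n_{j+1}m_1/(Hq_i^{s_{j+1}})$, a fraction with the same base denominator $q_i^{s_{j+1}}$ scaled by the single common $H$. Your $\beta_{C,j}=0$ destroys this alignment, so $c_jx_1$ lands on a grid unrelated to the grid of $A_{j+1}$.

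The point you are missing, and which is the actual content of the proof, is the decomposition $m_1=kH+h$ with $0\leq h<H$. Writing
$$\Bigl(\tfrac{m_1}{q_i^{s_1}},\dots,\tfrac{m_d}{q_i^{s_d}}\Bigr)=\tfrac{kH}{q_i^{s_1}}\cdot\Bigl(1,\tfrac{n_2}{Hq_i^{s_2-s_1}},\dots\Bigr)+\Bigl(\tfrac{h}{q_i^{s_1}},\tfrac{m_2-kn_2}{q_i^{s_2}},\dots\Bigr),$$
the first summand is a multiple of the normal direction and dies under $\pi_V$, leaving a grid with $\lesssim H$ choices in the first slot and $\lesssim q_i^{s_j}$ in the others, i.e.\ $\lesssim Hq_i^{s_2+\cdots+s_d}$ points total, which gives $\frac{(d-1)\sum s_i+t}{d}$ after optimizing $H\leq q_i^{\gamma}$. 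Without this decomposition there is no mechanism to produce the $1/d$ gain; your ``pigeonhole across $j$ and amalgamate collisions via $H_C$'' is a description of the difficulty, not a solution to it. You also acknowledge that what you can prove naively is $\sum_js_j+\gamma_C$, which is strictly weaker. To fix the proposal you would need to switch to the paper's parameters ($\gamma=0$ for the $A_j$, $\beta_{C,j}=s_{j+1}-s_1$ for the direction set) and supply the $m_1=kH+h$ decomposition explicitly.
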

Compared to the planar case, it seems reasonable to expect the following.
\begin{conj}
    Suppose $E\subset\R^d$, $\mathcal{V}\subset G(d,d-1)$ are Borel sets, $\dH\mathcal{V}>0$ and $\mathcal{V}^\perp$ is not contained in a great circle, then there exists $V\in\mathcal{V}$ such that
    $$\dH \pi_V(E)\geq \min\{\frac{(d-1)\dH E+\dH\mathcal{V}}{d}, \dH E, d-1\}.$$
\end{conj}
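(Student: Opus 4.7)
The plan is to generalize the recent Ren--Wang resolution of the planar projection problem from lines in $\R^2$ to hyperplanes in $\R^d$. First, by Frostman's lemma I would fix measures $\mu$ on $E$ and $\nu$ on $\mathcal{V}$ with Frostman exponents $s'<\dH E$ and $t'<\dH\mathcal{V}$, and reduce the statement to a quantitative $\delta$-discretized claim: for $\nu$-many $V$, at every small scale $\delta$ the covering number $|\pi_V(\supp\mu)|_\delta$ should exceed $\delta^{-\sigma}$ with $\sigma$ arbitrarily close to $\min\{((d-1)s'+t')/d,\,s',\,d-1\}$. Equivalently, after dualizing projections to hyperplanes, one has to show that the $\mu$-mass concentrated in any $\delta$-slab orthogonal to $V^\perp$ is at most $\delta^{\sigma}$ (up to logs) for most $V\in\mathcal{V}$.

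Second, I would set up a multiscale / two-parameter pigeonholing as in Orponen--Shmerkin and Ren--Wang: partition $\mathcal{V}$ and $\supp\mu$ into dyadic levels on which the measures behave uniformly, extract a ``branching'' profile $(\sigma_j,\tau_j)$ at each scale $\delta_j$, and argue that if the conclusion fails at some scale then one produces an anomalously rich configuration: many points of $\supp\mu$ covered by thin $\delta$-slabs normal to many directions of $\mathcal{V}^\perp$. This is precisely a higher-dimensional $(s',t')$-Furstenberg configuration, with hyperplanes replacing lines.

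Third, and this is the heart of the argument, I would invoke (or, more honestly, need to establish) a sharp hyperplane Furstenberg estimate in $\R^d$: any Borel set meeting a $t'$-dimensional family of hyperplanes in an $s'$-dimensional set must have Hausdorff dimension at least $((d-1)s'+t')/d$ (up to the trivial caps $s'$ and $d-1$), provided the hyperplane directions are not concentrated on a great circle. The non-degeneracy hypothesis on $\mathcal{V}^\perp$ is exactly what is needed to extract $d$ linearly independent normals and thereby prevent the problem from collapsing to the $\R^{d-1}$ Marstrand/Kaufman regime, which would only yield the weaker exponent $s'$. The weight $d-1$ in the target exponent is the combinatorial cost of each hyperplane incidence recording $d-1$ of the $d$ coordinates, matching the sharpness exhibited by the Cartesian examples of Theorem \ref{thm-codimension-1}.

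Fourth, I would close the argument with a high-low frequency split of $\pi_V(\mu)$ and an induction on scales: the low-frequency part is controlled by the measure $\mu*\psi_\delta$ and standard $L^2$ bounds, while the high-frequency part is controlled by the Furstenberg estimate above, yielding an $L^2$ bound on $\pi_V(\mu)$ which, fed back into the multiscale decomposition, gives the desired exponent. The main obstacle is clearly the third step: a sharp $(s',t')$-hyperplane Furstenberg estimate in $\R^d$ under the stated non-degeneracy is not available in the literature at the precision Ren--Wang achieved for $d=2$. A plausible intermediate target is to first treat the ``strongly non-degenerate'' regime where $\mathcal{V}^\perp$ projects to a $t$-dimensional set in every great subsphere, using Guth--Wang--Zhang type multilinear Kakeya inputs, and then bootstrap to the general case via an induction-on-scales / broad--narrow dichotomy.
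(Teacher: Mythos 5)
This statement is stated in the paper as a \emph{conjecture}: the paper proves only the matching upper bound (the sharpness examples of Theorem \ref{thm-codimension-1}) and offers no proof of the projection lower bound. So there is no ``paper's proof'' to compare against, and your proposal should be judged as an attempt at an open problem.

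As such, your proposal contains a genuine gap, and you have correctly located it yourself: the entire argument funnels into Step 3, a sharp $(s',t')$-Furstenberg-type incidence estimate for hyperplanes in $\R^d$ with exponent $\frac{(d-1)s'+t'}{d}$ under the non-degeneracy hypothesis on $\mathcal{V}^\perp$. This is not a known result; in the planar case the analogous input is the full resolution of the Furstenberg set conjecture (Orponen--Shmerkin, Ren--Wang), which took decades and is specific to $d=2$ in essential ways (the incidence geometry of lines and the two-dimensional high-low method do not transfer directly to codimension-one slabs in $\R^d$). Replacing it by ``invoke or establish'' a higher-dimensional analogue is a restatement of the difficulty, not a proof: your reduction shows the conjecture would \emph{follow from} such an estimate, but the estimate is at least as hard as the conjecture itself. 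The remaining steps (Frostman discretization, multiscale pigeonholing, high-low decomposition) are standard scaffolding and plausible, but they carry no content until the core incidence input exists. One further caution: you should also verify that the non-degeneracy condition ``$\mathcal{V}^\perp$ not contained in a great circle'' is actually strong enough to run a multilinear or broad--narrow argument --- it only rules out one degenerate configuration, and intermediate degeneracies (e.g.\ $\mathcal{V}^\perp$ concentrated near a lower-dimensional subsphere of dimension between $1$ and $d-2$) may force a more refined statement, which is one reason the paper leaves this as a conjecture rather than a theorem.
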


When the codimension is greater than $1$, namely $n<d-1$, things get more complicated. In this case, our construction on $\dH\pi_V(A_1\times\cdots\times A_d)$ is determined by the vector $(\dH A_1, \dots, \dH A_d)$, not $\sum\dH A_i$. Because of this, it is difficult to make conjectures on $\dH\pi_V(E)$ from our Cartesian product example $E=A_1\times\cdots\times A_d$ to general $E\subset\R^d$. Also the Cartesian product structure on the direction set makes some difference when $n<d-1$. For example it seems one should expect different dimensional exponents on $$\dH \pi_e(A_1\times A_2\times A_3),\  e\in \Omega\subset S^2$$ and $$\dH (A_1+b_1A_2+b_2A_3),\  b_1\in B_1, b_2\in B_2,$$
even if $\dH\Omega=\dH (B_1\times B_2)$. We make the list for $d=3, n=1$ in Section \ref{subsec-codimension>1} to give readers some feeling.

Although the above somehow suggests that fully understanding higher dimensional orthogonal projections is challenging, we can still try to make some guesses. The followings are suggested by an anonymous reviewer. The idea is to remove some algebraic subset from $G(d,n)$ that needs to be figured out in the future.

\begin{conj}
    Let $E\subset\R^d$ be Borel, and let $\V\subset G(d,n)$ be Borel with $\dH\V>0$ and $V^\perp$ not contained in a proper algebraic subset. Then there exists $V\in\V$ such that
    $$\dH \pi_V(E)\geq\min\{\frac{n\dH E+\dH V}{d}, n\}$$
\end{conj}

\begin{conj}
If $E=A_1\times\cdots\times A_d\subset\R^d$ with $\dH A_j=s_j$, $s_1\leq\cdots\leq s_d$. Then for Borel $\V\subset G(d,n)$ with $\dH\V=t>0$ and $V^\perp$ not contained in a proper algebraic subset, there exists $V\in\V$ such that
    $$\dH \pi_V(E)\geq\min\{\sum_{j=d-n+1}^d s_j, \frac{n\sum_{j=1}^d s_j+t}{d}, n\}.$$
\end{conj}

\subsection{Fourier dimension of Diophantine approximation}
In addition to its Hausdorff dimension, the Diophantine approximation \eqref{Jarnik} is also famous for being a Salem set. To introduce the notion of Salem set we need to define the Fourier dimension. Here and throughout this paper, $\M(E)$ denotes the collection of nonzero finite Borel measures supported on a compact subset of $E$. Also $X\lesssim Y$ means $X\leq CY$ for some constant $C>0$, and $X\lesssim_\epsilon Y$ means this constant $C$ may depend on $\epsilon$.
\begin{Def}
    For a subset $E\subset\R^d$, its Fourier dimension is defined by
    $$\dF E:=\sup\{t\leq d:\exists\, \mu\in\M(E), s.t.\, |\hat{\mu}(\xi)|\lesssim |\xi|^{-t/2}\}.$$
\end{Def}
Due to an equivalent definition of the Hausdorff dimension through the energy integral (see, e.g. Section 2.5 in \cite{Mat15})
$$\dH E=\sup\{s: \exists\,\mu\in\M(E), s.t.\,\int|\hat{\mu}(\xi)|^2|\xi|^{-d+s}\,d\xi<\infty\},$$
one can conclude $\dim_{\mathcal{F}} E\leq \dH E$. A set is called Salem if the equality holds. In 1981, Kaufman \cite{Kau81} proved that the Diophantine approximation \eqref{Jarnik} is a Salem set, and so far it is still the only known explicit construction of Salem sets with arbitrary dimension in $\R$. In higher dimensions things are more complicated. Explicit Salem sets of arbitrary dimension in arbitrary $\R^d$ are not known until the recent work of Fraser-Hambrook \cite{FHR25}, and their construction relies on algebraic number theory. For more discussions we refer to their paper and references therein.

As one can imagine, Fourier dimension and Hausdorff dimension are not always equal. For example it is well known that the one-third Cantor set has Fourier dimension $0$. In fact the set \eqref{simple-example} at the beginning of this paper also has Fourier dimension $0$. To see this, as \eqref{simple-example} is constructed by neighborhoods of arithmetic progressions, $\dH(E+\cdots+E)=\dH E$ for every finite sum. On the other hand, if it has positive Fourier dimension, $\mu*\cdots*\mu$ would have fast Fourier decay and eventually ensure $E+\cdots +E$ to have positive Lebesgue measure. 

Then a natural question is, given arbitrary $0\leq t\leq s\leq 1$, does there exist a set in $\R$ of Hausdorff dimension $s$ and Fourier dimension $t$? This question alone is not very interesting, as one can just take a disjoint union of two compact sets, one is Salem of dimension $t$ and the other is \eqref{simple-example} of Hausdorff dimension $s$. However, this example is very hard to use, because people usually study sets with measures but there is no measure on this set that captures both dimensions, by which we mean
\begin{itemize}
     \item (Frostman condition) $$\mu(B(x,r))\lesssim_\epsilon r^{\dH E-\epsilon},\ \forall\,\epsilon>0;$$
     \item (Fourier decay) $$|\hat{\mu}(\xi)|\lesssim_\epsilon |\xi|^{-\frac{\dim_{\mathcal{F}}E}{2} +\epsilon}, \ \forall\,\epsilon>0.$$
 \end{itemize}
For the relation between Frostman condition and Hausdorff dimension, we again refer to Section 2.5 in \cite{Mat15}.

Based on the above, we rephrase the question to the following: given arbitrary $0\leq t\leq s\leq 1$, does there exist a set in $\R$ of Hausdorff dimension $s$ and Fourier dimension $t$, together with a measure that captures both dimensions? A related result was claimed by K\"orner in \cite{Kor11} using the Baire category method, but there is an error in the proof. We thank Nir Lev for pointing it out and starting a tripartite discussion with K\"orner. Eventually we all agree that the arguments in \cite{Kor11} work only for the case $s=t$. In this paper, we answer this question affirmatively for all $0\leq t\leq s\leq 1$.
\begin{thm}\label{thm-Fourier-dimension}
Suppose $\beta, \gamma\geq 0$ and $2\gamma+\beta\leq 1$. Then there exists an increasing $\{q_i\}$ in $\R_+$ such that the set
	$$E:=\begin{cases}
	    \bigcap_i\bigcup_{1\leq H\leq q_i^\gamma}\N_{q_i^{-1}}\left(\frac{\Z}{Hq_i^{\beta}}\right), & \text{if } 2\gamma+\beta<1\\
     \bigcap_i\bigcup_{1\leq H\leq q_i^\gamma, prime}\N_{q_i^{-1}}\left(\frac{\Z}{Hq_i^{\beta}}\right), & \text{if } 2\gamma+\beta=1
	\end{cases}$$
	has Hausdorff dimension $2\gamma+\beta$ and Fourier dimension $2\gamma$. Moreover, there exists a finite Borel measure $\mu$ supported on 
 $$\bigcap_i\bigcup_{q_i^\gamma/2\leq p\leq q_i^\gamma, \,  prime}\N_{q_i^{-1}}\left(\frac{\Z\backslash p\Z}{pq_i^{\beta}}\right)\cap[0,1]$$
 satisfying
$$\mu(B(x,r))\lesssim_\epsilon r^{\dH E-\epsilon}
     \ \text{and} \ |\hat{\mu}(\xi)|\lesssim_\epsilon |\xi|^{-\frac{\dim_{\mathcal{F}}E}{2} +\epsilon}, \ \forall\,\epsilon>0.$$
\end{thm}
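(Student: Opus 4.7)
The Hausdorff-dimension half follows immediately from Theorem~\ref{thm-our-example}: with $d=1$ and $\beta_1=\beta$, the ambient set has Hausdorff dimension $2\gamma+\beta$, and restricting to prime $H$ in the endpoint case only shrinks $E$ further. The matching lower bound $\dH E\ge 2\gamma+\beta$ will be inherited from the Frostman property of the measure $\mu$ built below. For the upper bound $\dF E\le 2\gamma$, which is strictly sharper than $\dF E\le\dH E$ when $\beta>0$, I plan to test an arbitrary $\nu\in\M(E)$ against frequencies $\xi\asymp q_i^\beta$ selected along the primes $p\in P_i$: the $i$-th level confinement of $\nu$ together with a pigeonhole over $p$ forces $|\widehat\nu(\xi)|\gtrsim q_i^{-\gamma}$ for some such $\xi$, which translates into $\dF E\le 2\gamma$.

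The core of the theorem is the construction of $\mu$. Fix a nonnegative Schwartz bump $\phi$ with $\int\phi=1$ and $\widehat\phi$ compactly supported, and define
$$f_i(x)=\frac{1}{N_i}\sum_{p\in P_i}\sum_{\substack{0\le a<pq_i^\beta\\ \gcd(a,p)=1}} q_i\,\phi\bigl(q_i(x-a/(pq_i^\beta))\bigr),$$
where $P_i\subset(q_i^\gamma/2, q_i^\gamma]$ (primes only in the endpoint case) and $N_i$ is chosen so that $\int_0^1 f_i=1$. The superpolynomial growth of $\{q_i\}$ guaranteed by Theorem~\ref{thm-our-example} makes $f_{i+1}$ indistinguishable from a local average at scale $q_i^{-1}$, so the weak-$*$ limit $\mu=\lim_n\bigl(\prod_{i\le n}f_i\bigr)\,dx$ exists as a probability measure supported on the desired subset of $E$. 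The Frostman estimate $\mu(B(x,r))\lesssim_\epsilon r^{2\gamma+\beta-\epsilon}$ is then a counting exercise: for $r\in[q_{i+1}^{-1},q_i^{-1}]$, the ball $B(x,r)$ meets $O(rq_i^{2\gamma+\beta})$ of the $i$-th level centers, each carrying mass $\asymp q_i^{-(2\gamma+\beta)}$, and summing geometrically in $i$ finishes.

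The main obstacle is the Fourier decay $|\widehat\mu(\xi)|\lesssim_\epsilon|\xi|^{-\gamma+\epsilon}$. Since $\widehat\mu$ is essentially $\widehat{f_1}\ast\widehat{f_2}\ast\cdots$, the bound reduces to controlling $|\widehat{f_i}(\xi)|$ at its natural scale $|\xi|\lesssim q_i$ and then propagating through the sparseness of the $q_i$'s. Expanding,
$$\widehat{f_i}(\xi)=\frac{\widehat\phi(\xi/q_i)}{N_i}\sum_{p\in P_i}\sum_{\substack{a\bmod pq_i^\beta\\ \gcd(a,p)=1}} e^{-2\pi i a\xi/(pq_i^\beta)}.$$
By CRT the inner $a$-sum factors into a Ramanujan sum modulo $p$ times an elementary sum modulo $q_i^\beta$, which cancels unless $\xi/(pq_i^\beta)$ is unusually close to an integer for that specific $p$. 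The crucial gain comes from summing over $p\in P_i$: since two distinct primes $p,p'\in(q_i^\gamma/2,q_i^\gamma]$ are coprime, $pp'\gtrsim q_i^{2\gamma}$ cannot simultaneously divide the obstructive part of a frequency of size $\lesssim q_i$ (precisely the content of $2\gamma+\beta\le 1$, saturated in the endpoint case), so at most one $p$ resonates with a given $\xi$ while the others contribute genuine oscillation. This delivers $|\widehat{f_i}(\xi)|\lesssim_\epsilon q_i^{-\gamma+\epsilon}$ throughout the relevant frequency window, and Kaufman's scale-by-scale iteration promotes it to the claimed global bound on $|\widehat\mu(\xi)|$. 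The endpoint case $2\gamma+\beta=1$ is where the prime restriction becomes indispensable: without it, a generic $\xi$ would resonate with many composite divisors $H\le q_i^\gamma$ at once, overwhelming the cancellation and leaving only the weaker decay $|\xi|^{0+\epsilon}$.
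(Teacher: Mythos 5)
Your construction of $\mu$ and the Fourier--decay argument match the paper's Section 4.1: your $f_i$ is the paper's $F_i$, the CRT/Ramanujan-sum factorization is exactly how the paper computes the coefficients $\widehat{\Phi_{i,p}}(k)$ in (4.3), and ``Kaufman's scale-by-scale iteration'' is the content of Lemma 4.1. The Frostman sketch also has the right shape, although the dyadic split should occur at $r\approx q_i^{-(2\gamma+\beta)}$ (the separation scale of the $i$-th level intervals), not at $r\approx q_i^{-1}$; as written the bound $\mu(B(x,r))\lesssim q_i^{-(2\gamma+\beta)}$ for $r\in[q_{i+1}^{-1},q_i^{-1}]$ does not give $r^{2\gamma+\beta-\epsilon}$ when $r$ is near $q_{i+1}^{-1}$.

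The genuine gap is the upper bound $\dF E\le 2\gamma$. Your proposed argument fails for two independent reasons. First, a lower bound $|\widehat{\nu}(\xi)|\gtrsim q_i^{-\gamma}$ at frequencies $\xi\asymp q_i^\beta$ translates to $|\widehat{\nu}(\xi)|\gtrsim|\xi|^{-\gamma/\beta}$, which only excludes decay exponents larger than $\gamma/\beta$ and hence only proves $\dF E\le 2\gamma/\beta$, strictly weaker than $2\gamma$ whenever $\beta>0$. Second, even that lower bound is not established: after pigeonholing to find $p_0$ with $\nu(E_{i,p_0})\gtrsim q_i^{-\gamma}$, the piece $\int_{E\setminus E_{i,p_0}}e^{-2\pi i x\xi}\,d\nu$ is completely uncontrolled for an arbitrary $\nu\in\M(E)$ and can cancel or swamp $\int_{E_{i,p_0}}e^{-2\pi i x\xi}\,d\nu\approx\nu(E_{i,p_0})$; if one instead bounds the main term via the Frostman exponent that a decay $|\xi|^{-\gamma'}$ would force on $\nu$, the pigeonhole only yields $\gamma'\le\gamma+\beta/2$, i.e.\ the trivial bound $\dF E\le\dH E$. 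The paper (Section 4.2) proves the sharp bound by a rescaling contradiction that has no analogue in your proposal: assuming a measure $\nu$ on $E$ with $|\widehat\nu(\xi)|\lesssim|\xi|^{-\gamma'}$, $\gamma'>\gamma$, it mollifies at scale $q_i^{-1}$, rescales by $q_i^\beta$, periodizes, and takes a sparse product over a subsequence $\{q_{i_j}\}$ to build a measure on the $\beta=0$ set $E'$ with decay $|\xi|^{-\gamma'/(1-\beta)}$. When $2\gamma+\beta<1$ this contradicts $\dF E'\le\dH E'=2\gamma/(1-\beta)$; when $2\gamma+\beta=1$ it contradicts $|E'|=0$. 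This mechanism also corrects your reading of the endpoint prime restriction: the measure $\mu$ is built over primes in \emph{both} cases (which is also what the theorem statement requires for $\supp\mu$), and the prime restriction in defining $E$ when $2\gamma+\beta=1$ serves to make $E$ Lebesgue-null (so that $|E'|=0$ can be contradicted), not to improve Fourier cancellation.
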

The case $2\gamma+\beta=1$ is trickier as we need $E$ to have Lebesgue measure $0$. Then the following question sounds interesting:
\begin{center}
    {\it Does there exist $E\subset\R$ of positive Lebesgue measure and Fourier dimension $t$, for arbitrary $0\leq t\leq 1$, together with $f\in L^1(E)$ that achieves the optimal Fourier decay?}
\end{center}
The endpoint cases are already known, see Example 7 in \cite{EPS15} for $t=0$ and Theorem 2.1 in \cite{Shmerkin17} for $t=1$, while it is still unknown for $0<t<1$.

We believe that results similar to Theorem \ref{thm-Fourier-dimension} should hold in higher dimensions. But the construction may not be straightforward, as the only explicit Salem set we know is the algebraic construction of Fraser-Hambrook \cite{FH23}. 

\subsection{Sharpness of Fourier restriction estimates}

Our last application is on Fourier restriction estimates. Suppose $0<a, b <d$ and $\mu\in\M(\R^d)$ satisfying
$$\mu(B(x,r))\lesssim r^a \quad \text{and}\quad  |\hat{\mu}(\xi)|\lesssim |\xi|^{-b/2}.$$
The Mockenhaupt-Mitsis-Bak-Seeger Fourier restriction estimate states that
\begin{equation}\label{Fourier-restriction} \|\widehat{f\,d\mu}\|_{L^p(\R^d)}\lesssim_{p} \|f\|_{L^2(\mu)},\ \forall\,p\geq p_*(a,b, d):=\frac{4d-4a+2b}{b}.\end{equation}
This was independently proved by Mockenhaupt \cite{Moc00} and Mitsis \cite{Mit02} for $p>p_*(a, b, d)$, and the endpoint is due to Bak-Seeger \cite{BS11}. It is a generalization of the classical Stein-Tomas estimate, in which $\mu$ is the surface measure on $S^{d-1}$ with $a=b=d-1$. See also \cite{CS17}\cite{LW18}\cite{SS18}\cite{Ryou24}\cite{CFO24+} for related estimates.

Stein-Tomas is known be optimal, due to the famous Knapp's example, that is to consider small caps on the sphere. The sharpness of \eqref{Fourier-restriction} is, however, not this straightforward. 
Hambrook and Laba \cite{HL13} proved the optimality of $p_*(a,b,d)$ in \eqref{Fourier-restriction} when
$d = 1$ and $0 < b \leq a \leq d$. (In fact, Hambrook and Laba \cite{HL13} only claimed the case
$a=b$ for $a, b$ of a special form. The PhD thesis of Hambrook showed how optimality for all $b \leq a$ could be deduced from the result of \cite{HL13}. Later Chen \cite{Che16} modified the construction of \cite{HL13}
to obtain a technically stronger result that also implied optimality for $b \leq a$.) We refer to \cite{Laba14} for an expository paper, as well as \cite{HL16} for a higher dimensional result. Notice that all these constructions use randomness, and that none of them covered the case $b > a$. In this paper, we shall call $b\leq a$ the geometric case and $b>a$ the non-geometric case.

The first explicit sharpness examples for \eqref{Fourier-restriction} in the line, for the full range of $a,b$, were constructed recently due to Fraser-Hambrook-Ryou \cite{FHR25}. Later they generalized their result to higher dimensions \cite{FHR25+}.

In this paper, we investigate this sharpness from the perspective of dimensions. The geometric case $b\leq a$ and the non-geometric case $b>a$ are treated separately. 

\subsubsection{The geometric case}
For the geometric case, one can directly take sets and measures from Theorem \ref{thm-our-example}. We are able to obtain not only the sharpness of \eqref{Fourier-restriction}, but also a viewpoint from Hausdorff and Fourier dimensions of sets. This is why we call this case geometric.
\begin{thm}\label{thm-geo}
    Given $0<b\leq a< 1$, there exists a compact subset $E\subset [0,1]$ with $\dH E=a$, $\dF E=b$, and the following.
    \begin{enumerate}[(i)]
        \item There exists $\mu\in\M(E)$ satisfying
        $$\mu(B(x,r))\lesssim_\epsilon r^{a-\epsilon} \quad \text{and}\quad  |\hat{\mu}(\xi)|\lesssim_\epsilon |\xi|^{-b/2+\epsilon}.$$
        Consequently,
        $$\|\widehat{f\,d\mu}\|_{L^p(\R^d)}\lesssim_{p} \|f\|_{L^2(\mu)},\ \forall\,p>p_*(a,b,1)=\frac{4-4a+2b}{b}.$$
        \item For all $\mu\in\M(E)$ and all $p,q\in[1,\infty]$ satisfying
 $$p<\frac{2-2a+b}{b}q',\ \text{where }\frac{1}{q}+\frac{1}{q'}=1,$$
 the $L^q(\mu)\rightarrow L^p(\R)$ Fourier restriction fails, namely,
     $$\sup_{f\in L^q(\mu)}\frac{\|\widehat{f\,d\mu}\|_{L^p(\R)}}{\|f\|_{L^q(\mu)}}=\infty.$$
    \end{enumerate}
\end{thm}

\subsubsection{The non-geometric case}
Now we turn to the non-geometric case $b>a$. It is pointed out by Mitsis in \cite{Mit02} that it suffices to consider $a<b\leq 2a$. More precisely, if $|\hat{\mu}(\xi)|\lesssim |\xi|^{-b/2}$, then
$$\mu(B(x,r))\leq \int \phi(\frac{y-x}{r})\,d\mu(y)=r\int e^{2\pi i x\cdot\xi}\,\overline{\hat{\phi}(r\xi)}\,\hat{\mu}(\xi)\,d\xi$$ $$\leq r\int |\hat{\mu}(\xi)||\hat{\phi}(r\xi)|\,d\xi\lesssim r\int |\xi|^{-b/2}|\hat{\phi}(r\xi)|\,d\xi\lesssim r^{b/2}$$
for arbitrary $\phi\in C_0^\infty$ positive on the unit ball.

In the non-geometric case, there is no direct analog of Theorem \ref{thm-geo} because of $\dF E\leq \dH E$. This is also why we call it non-geometric. Instead of dimensions of sets, in this case we investigate the sharpness of Fourier restriction from dimensions of measures. In fact, this viewpoint is valid for all possible values of $a, b$.

Let
 \begin{equation}
     \label{def-H-dim-measure}
\dH\mu:=\inf\limits_{x\in\supp\mu}\left(\liminf\limits_{r\rightarrow 0}\frac{\log\mu(B(x,r))}{\log r}\right)\footnote{\text The quantity $\liminf\limits_{r\rightarrow 0}\frac{\log\mu(B(x,r))}{\log r}$ is usually called the lower local dimension of $\mu$ at $x$ and denoted by $\underline{\dim}(\mu,x)$, $\underline{\dim}_{loc}\mu(x)$, or $\underline{\dim}_{loc}(\mu,x)$.},
 \end{equation}
 \begin{equation}
     \label{def-F-dim-measure}
     \dF\mu:=\sup\{t: \sup_{|\xi|>1}|\hat{\mu}(\xi)||\xi|^{-t/2}<\infty\}.
 \end{equation}

\begin{thm}\label{thm-non-geo}
    Suppose $a,b\in(0,1)$, $b\leq 2a$. Then there exists a Borel measure on $[0,1]$ with $\dH \mu=a$, $\dF \mu=b$, and the following.
    \begin{enumerate}[(i)]
        \item The $L^2(\mu)\rightarrow L^p(\R)$ Fourier restriction holds for $p>p_*$, namely,
        $$\|\widehat{f\,d\mu}\|_{L^p(\R^d)}\lesssim_{p} \|f\|_{L^2(\mu)},\ \forall\,p>p_*(a,b,1)=\frac{4-4a+2b}{b}.$$
        \item For all $p,q\in[1,\infty]$ satisfying
 $$p<\frac{2-2a+b}{b}q', \ \text{where }\frac{1}{q}+\frac{1}{q'}=1,$$
 the $L^q(\mu)\rightarrow L^p(\R)$ Fourier restriction fails, namely,
     $$\sup_{f\in L^q(\mu)}\frac{\|\widehat{f\,d\mu}\|_{L^p(\R)}}{\|f\|_{L^q(\mu)}}=\infty.$$
    \end{enumerate}
\end{thm}

\subsection{Dimension of measures}\label{subsec-dim-measure}
In the previous subsection we discuss dimension of measures. For Fourier dimension, \eqref{def-F-dim-measure} seems to be the only reasonable definition. However, $\dH\mu$ in \eqref{def-H-dim-measure} looks slightly different from the commonly used Hausdorff dimension of measures in the literature:
\begin{equation}\label{fake-lower-Hausdorff-dime}\essinf_{x\sim \mu}\left(\liminf_{r\rightarrow 0}\frac{\log\mu(B(x,r))}{\log r}\right)=\inf\{\dH E: \mu(E)>0\}.\end{equation}
Some people use $\underline{\dim}_{\mathcal H}\mu$ to denote \eqref{fake-lower-Hausdorff-dime} as it is also called the lower Hausdorff dimension of $\mu$. For its basic properties and other related dimensions we refer to Chapter 10 in \cite{Falconer97}.

By comparing \eqref{def-H-dim-measure} and \eqref{fake-lower-Hausdorff-dime}, clearly $\dH\mu\leq \underline{\dim}_{\mathcal H}\mu$ because of $\inf\leq\essinf$. Then one may wonder if their difference really matters. The answer is yes in this paper. To study the non-geometric case we need $\dF\mu>\dH \mu$, but one can simply see that $\dF\mu\leq \underline{\dim}_{\mathcal H}\mu$:
$$|\hat{\mu}(\xi)|\lesssim |\xi|^{-t/2}$$ $$\implies \iint|x-y|^{-t'}d\mu(x)\,d\mu(y)=c\iint|\hat{\mu}(\xi)|^2|\xi|^{-d+t'}d\xi<\infty,\ \forall\,t'<t$$
$$\implies \int|x-y|^{-t'}d\mu(y):=C_x<\infty,\ \text{ for }\mu \text{-a.e. }x$$
$$\implies \mu(B(x,r))\leq C_xr^{t'},\ \forall\,r>0\text{ and }\mu \text{-a.e. }x$$
$$\implies \liminf_{r\rightarrow 0}\frac{\log\mu(B(x,r))}{\log r}\geq t', \ \text{ for }\mu \text{-a.e. }x.$$
Therefore the classical notion of Hausdorff dimension of measures \eqref{fake-lower-Hausdorff-dime} does not help us on the non-geometric case of the Fourier restriction.

\medskip
\subsection*{Organization}This paper is organized as follows. In Section \ref{sec-Hausdorff} we study the Hausdorff dimension and prove Theorem \ref{thm-our-example}. In Section \ref{sec-proj-sum-product} we consider applications on orthogonal projections and sum-product. In addition to the proof of Theorem \ref{thm-sharp-ABC}, \ref{thm-codimension-1}, we also discuss the case $d=3, n=1$ in detail to give readers some feelings on the complexity of orthogonal projections with codimension greater than $1$ (Proposition \ref{thm-d=3-n=1}, \ref{prop-prod-direction}). In Section \ref{sec-Fourier} we study the Fourier dimension. There will be three subsections: first we construct a measure with desired Fourier decay; then we show no measure could have faster Fourier decay; finally we show the measure constructed is also Frostman, thus complete the proof of Theorem \ref{thm-Fourier-dimension}. In Section \ref{sec-Sharpness} we prove Theorem \ref{thm-geo} and in Section \ref{sec-dim-measure} we prove Theorem \ref{thm-non-geo}.

\subsection*{Acknowledgement} Our original motivation is on the $ABC$ sum-product only. A discussion with De-Jun Feng encouraged us to consider the Fourier dimension. Then the second author attended a workshop in IBS Korea organized by Doowon Koh, Ben Lund and Sang-il Oum, where he learned the work of Fraser-Hambrook-Ryou on the Fourier restriction. We would like to thank their inspiration and encouragement.

We would like to thank Kevin Ren and an anonymous reviewer for discussion  and suggestion on orthogonal projections in higher dimensions. We also would like to thank Ben Ward, Baowei Wang, Nir Lev, Tom K\"orner, Izabella Laba, for discussion on earlier drafts.

\section{Hausdorff dimension of Diophantine approximation}\label{sec-Hausdorff}
    In this section we prove Theorem 1.1. The idea is not much different from the classic. But somehow we couldn't find a reference to help us skip some details. For example Jarnik's original paper \cite{Jar31} is not in English; there is no proof in the paper of Kaufman and Mattila \cite{KM75}; other classical sources like Besicovich's paper \cite{Bes34} and Falconer's book \cite{Fal14} only discuss $d=1$, while the higher dimensional case is a bit trickier (see below). Finally we decide to provide all the details, not only for the completeness, also hoping it can serve as a study guide for interested readers.
    
    For convenience we write the set as
    $$E=\bigcap_{i=1}^\infty E_i$$
    and denote
    \begin{equation}\label{def-s}s:=(d+1)\gamma+\sum_{j=1}^{d}\beta_{j}.\end{equation}
    It suffices to show
    $$\dim_{\mathcal{H}}E\cap[0,1)^d=s$$ given $s<d$. The case $s\geq d$ follows from the monotonicity of $E$ in $\gamma$.

    The upper bound is easy: every $E_i$ can be covered by no more than
    $$\sum_{1\leq H\leq q_i^\gamma} \prod_{1\leq j\leq d}H q_i^{\beta_j}\leq q_i^s $$
    cubes of side length $2q_i^{-1}$, thus $\dH E\leq s$. 
    
    For the lower bound, it is well known and easy to check that, if one can construct a Frostman measure on $E$, namely a finite Borel measure $\mu$ on $E$ satisfying
    $$\mu(B(x,r))\leq r^\alpha,\ \forall\,x\in\R^d,\,r>0,$$
    then the Hausdorff dimension of $E$ is at least $\alpha$. We refer to Theorem 2.7 in \cite{Mat15} for a reference.

\subsection{$\gamma=0$}\label{subsec-gamma=0}
We discuss the case $\gamma=0$ first as it is much simpler and already illustrate the main idea in the proof.

 When $\gamma=0$, the $j$th coordinate of each $E_i$ is just the $q_i^{-1}$-neighborhood of $q_i^{-\beta_j}\Z$ in $[0,1)$. As $q_i^{-\beta_j}<q_{i-1}^{-1}$ for all $j$ (recall \eqref{how-fast}, now $\gamma=0$), every $q_{i-1}^{-1}$-cube $Q$ in $E_{i-1}$ has a nonempty intersection with $E_i$. Moreover, by the lattice structure of $q_i^{-\beta_j}\Z$ in $E_i$, in $Q\cap E_i$ per coordinate the number of $q_i^{-1}$-intervals is 
 \begin{equation}\approx q_{i-1}q_i^{\beta_j}, j=1,\dots,d.\end{equation} Therefore the number of $q_i^{-1}$-cubes in $E_i\cap Q$ is
 \begin{equation}\label{counting-gamma=0}\approx \prod_{j=1}^d(q_{i-1}q_i^{\beta_j})=q_{i-1}^dq_i^{\sum_{j=1}^d\beta_j}=q_{i-1}^{-d} q_i^s,\end{equation}
where the implicit constants $0<c_d<C_d<\infty$ in \eqref{counting-gamma=0} are both independent in $i$. 
 
Then we construct our Frostman measure $\mu$ on $E$ as the following. Let $F_0=[0,1]^d$. Once $F_{i-1}$ is defined, inside every $q_{i-1}^{-1}$-cube in $F_{i-1}$ we pick exactly $c_d\,q_{i-1}^{-d} q_i^s$ many $q_i^{-1}$-cubes from $E_i$, and call the union of all chosen $q_i^{-1}$-cubes $F_i$. In particular,
    	\begin{equation}
    	    \label{total-gamma=0}
         \#\{q_{i}^{-1}\text{-cubes in }F_{i}\}=\prod_{k=1}^{i}c_d\,q_{k-1}^{-d}\,q_k^s.
    	\end{equation}
    Define
$$\mu_i:=\mathcal{H}^d(F_i)^{-1}\cdot\mathcal{H}^d|_{F_i},$$
    where $\mathcal{H}^d$ denote the $d$-dimensional Hausdorff measure. As each $q_{i-1}^{-1}$-cube $Q$ in $F_{i-1}$ contains the same amount of $q_i^{-1}$-cubes from $F_i$, one can conclude that for each $q_{i-1}^{-1}$-cube $Q$ in $F_i$,
    $$\mu_m(Q)=\left(\prod_{k=1}^{i}c_d\,q_{k-1}^{-d}\,q_k^s\right)^{-1},\ \forall\, m\geq i.$$ 
    In particular $\mu(Q)=\mu_i(Q)$ for every $q_i^{-1}$-cube in $F_i$, which guarantees that the weak limit of $\mu_i$ exists, denoted by $\mu$. Then $\mu$ is a probability measure supported on $\cap_{i=1}^\infty F_i\subset E$.
    
     We claim that $\mu$ is a Frostman measure of exponent $s'$ for any $s'<s$. This would complete our proof for $\gamma=0$.

Let $B(x,r)$ an arbitrary $r$-ball. Then there exists $i_0$ such that $$q_{i_0}^{-s/d}\leq r<q_{i_0-1}^{-s/d}.$$
    As $r\geq q_{i_0}^{-s/d}\geq q_{i_0}^{-1}$,
    \begin{equation}
    	\label{reduction-to-i-0-gamma=0}
    	\mu(B(x,r))\leq \mu(\bigcup_{\substack{Q\cap B(x,r)\neq\emptyset\\ q_{i_0}^{-1}\text{-cubes in } F_{i_0}}} Q)=\mu_{i_0}(\bigcup_{\substack{Q\cap B(x,r)\neq\emptyset\\ Q \text{ in } F_{i_0}}} Q)\leq \mu_{i_0}(B(x,C_d r)),
    \end{equation}
    and the problem is reduced to estimates on $F_{i_0}\cap B(x,r)$. There are two ways. First, by the lattice structure of $q_{i_0}^{-\beta_1}\Z\times\cdots\times q_{i_0}^{-\beta_d}\Z$ in    $E_{i_0}$ and $F_{i_0}\subset E_{i_0}$, in $F_{i_0}\cap B(x,r)$ per coordinate the number of $q_{i_0}^{-1}$-intervals is 
 \begin{equation}\lesssim r q_{i_0}^{\beta_j}, j=1,\dots,d.\end{equation}
 Therefore, with $s=\sum_{j=1}^d\beta_j$ defined in \eqref{def-s}, we have
    \begin{equation}\label{upper-r-gamma=0}\#\{q_{i_0}^{-1}\text{-cubes in }F_{i_0}\cap B(x,r)\}\lesssim \prod_{j=1}^d rq_{i_0}^{\beta_j}=C_d\,r^d q_{i_0}^s.\end{equation}    
    
    On the other hand, as $q_{i_0-1}^{-1}$-cubes in $F_{i_0-1}$ are $q_{i_0-1}^{-\min\beta_j}\geq q_{i_0-1}^{-s/d}$-separated, every $B(x,r)$ with $r<q_{i_0-1}^{-s/d}$ can intersect at most $C_d$ many $q_{i_0-1}^{-1}$-cubes from $F_{i_0-1}$, thus by \eqref{counting-gamma=0},
    \begin{equation}\label{upper-q-i-0-gamma=0}\#\{q_{i_0}^{-1}\text{-cubes in }F_{i_0}\cap B(x,r)\}\leq C_d\,q_{i_0-1}^{-d} q_{i_0}^s.\end{equation}
    By interpolating \eqref{upper-r-gamma=0} and \eqref{upper-q-i-0-gamma=0}, for every $s_{i_0}\in(0,d)$, we have
    \begin{equation}\label{upper-final-gamma=0}\#\{q_{i_0}^{-1}\text{-cubes in }F_{i_0}\cap B(x,r)\}\leq C_d\,r^{s_{i_0}} q_{i_0-1}^{-d+s_{i_0}} q_{i_0}^s.\end{equation}
    We need $s_i\rightarrow s$ from below, say 
    \begin{equation}\label{def-s-i}s_i:= s-\frac{1}{i}.\end{equation}
    
    Now we can estimate $\mu_{i_0}(B(x, r))$. By \eqref{total-gamma=0} and \eqref{upper-final-gamma=0},
    \begin{align*}
        \mu_{i_0}(B(x, r)) \leq & C_d\,r^{s_{i_0}}\,q_{i_0-1}^{s_{i_0}-d}\,q_{i_0}^s\left(\prod_{i=1}^{i_0}c_d\,q_{i-1}^{-d}q_i^s\right)^{-1}\\
        =&C_d \, r^{s_{i_0}}\, q_{i_0-1}^{s_{i_0}}\prod_{i=1}^{i_0-1}c_d^{-1} q_{i-1}^{d} q_i^{-s}\\=&C_d\, r^{s_{i_0}}\,q_{i_0-1}^{-\frac{1}{i_0}}\prod_{i=1}^{i_0-2}c_d^{-1}\cdot q_{i}^{d-s}.
    \end{align*}
        As $q_i>q_{i-1}^{10di}$ from our assumption \eqref{how-fast}, it follows that
        $$q_{i_0-1}^{-\frac{1}{i_0}}\prod_{i=1}^{i_0-2}c_d^{-1}\cdot q_{i}^{d-s}\leq C_d'$$
        and therefore
        $$\mu_{i_0}(B(x, r))\leq C_d C_d' \,r^{s-\frac{1}{i_0}},\ \text{given }q_{i_0}^{-s/d}\leq r<q_{i_0-1}^{-s/d}.$$
        Together with \eqref{reduction-to-i-0-gamma=0}, it follows that
        $$\mu(B(x, r))\leq C_d''\, r^{s-\frac{1}{i}},\ \forall\,
        q_{i}^{-s/d}\leq r<q_{i-1}^{-s/d}.$$ Consequently, for every $s'<s$, there exists a constant $C_{s',d}$ such that
        $$\mu(B(x,r))\leq C_{s',d}\,r^{s'},\ \forall\,x\in\R^d,\,r>0,$$
        as desired. 

\subsection{$\gamma>0$}
When $\gamma>0$, $E_i$ is no longer as well separated as $\gamma=0$, but we can still find a large well-separated subset. 

Let 
\begin{equation}\label{def-mathcal-P-i}\mathcal{P}_i^\gamma:=\{\text{primes in }[q_i^\gamma/2, q_i^\gamma]\}.\end{equation}  Consider
$$E_i':=\bigcup_{p\in\mathcal{P}_i^\gamma}\{x\in [0,1)^d: \|pq_i^{\beta_j}x_j\|\leq pq_i^{\beta_j-1}, \forall j\},$$
where $\|x\|:=\dist(x,\Z^d)$ in $\R^d$. For convenience we denote
    \begin{equation}\label{def-E-i-p}
E_{i,p}:=\{x\in [0,1)^d: \|pq_i^{\beta_j}x_j\|\leq pq_i^{\beta_j-1}, \forall j\}.
    \end{equation}
    In other words, $E_{i,p}$ is the union of $q_i^{-1}$-cubes centered at $\frac{\Z}{pq_i^{\beta_1}}\times \cdots\times \frac{\Z}{pq_i^{\beta_d}}$ in $[0, 1)^d$. Under these notation,
$$E_i'=\bigcup_{p\in\mathcal{P}_i^\gamma}E_{i,p}.$$    
    
    By the prime number theorem and \eqref{def-s}, the total number of $q_i^{-1}$-cubes in $E_i'$ with multiplicity is
    \begin{equation}\label{total-number-cubes}
    	\sum_{p\in\mathcal{P}_i^\gamma}\#\{q_i^{-1}\text{-cubes in }E_{i,p}\}\approx q_i^\gamma /\log q_i^\gamma\prod_{j=1}^d q_i^{\beta_j+\gamma} \approx q_i^s/\log q_i^\gamma.
    \end{equation}

    In the classical case $d=1$, $\beta=0$, one can directly see that these $q_i^{-1}$-cubes are well separated: for integers $m,m'\neq 0$,
    $$\left|\frac{m}{p}-\frac{m'}{p'}\right|=\frac{|mp'-m'p|}{pp'}\geq q_i^{-2\gamma}=q_i^{-s}, \ \forall\,(m,p)\neq(m',p').$$

    When $\beta>0$ or in higher dimensions this separation still holds on a large subset. When $d=1$ one can just drop at most $q_i^\beta$ many integers to consider $$E''_i:=\bigcup_{p\in\mathcal{P}_i^\gamma}\mathcal{N}_{q_i^{-1}}\left(\frac{\Z\backslash p\Z}{p q_i^\beta}\right)\cap[0,1),$$
    where $\N_\delta(\cdot)$ denotes the $\delta$-neighborhood. Then in $E_i''$, centers of cubes satisfy
    \begin{equation}
        \label{seperation-d=1}
        \left|\frac{m}{pq_i^\beta}-\frac{m'}{p'q_i^\beta}\right|=\frac{|mp'-m'p|}{pp'q_i^\beta}\geq q_i^{-2\gamma-\beta}=q_i^{-s}, \ \forall\,(m,p)\neq(m',p').
    \end{equation}
Notice $q_i^\beta\ll q_i^s/\log q_i^\gamma$ when $\gamma>0$, so $E_i'\backslash E_i''$ is a negligible subset of $E_i'$.

    For higher dimensions it requires more work to obtain such an $E''_i$. Let $\epsilon_i\rightarrow 0$ be a decreasing sequence in $(0,d-s)$, say, $$\epsilon_i:=\min\{\frac{1}{i}, d-s\}.$$
    
    For every $p\in\mathcal{P}_i^\gamma$ fixed, $q_i^{-1}$-cubes in $E_{i,p}$ are already $pq_i^{-\min \beta_j}\geq q_i^{-(s+\epsilon_i)/d}$ separated, for $i$ large enough in terms of $\gamma>0$, good enough. 
    
    For distinct primes $p\neq p'$ and integers $m,m'$, as $p'm=pm'$ forces $p|m$ and $p'|m'$, it follows that, for every integer $k$,    \begin{align*}
        \#\left\{(m_j, m_j')\in[0,pq_i^{\beta_j})\times[0,p'q_i^{\beta_j}):\;\frac{m_j}{p q_i^{\beta_j}}-\frac{m_j'}{p'q_i^{\beta_j}}=\frac{k}{pp'q_i^{\beta_j}}\right\}\leq q_i^{\beta_j}.
    \end{align*}
    Notice $\frac{m_j}{p q_i^{\beta_j}}-\frac{m_j'}{p'q_i^{\beta_j}}\in \frac{\Z}{pp'q_i^{\beta_j}}$ and $p, p'\leq q_i^\gamma$. So this implies that, for each $1\leq j\leq d$,
    \begin{align*}
        &\#\left\{(m_j, m_j')\in[0,pq_i^{\beta_j}]\times[0,p'q_i^{\beta_j}]:\;\left|\frac{m_j}{p q_i^{\beta_j}}-\frac{m_j'}{p'q_i^{\beta_j}}\right|\leq q_i^{-(s+\epsilon_i)/d}\right\}\\
        \leq & q_i^{\beta_j}\cdot \#\{k\in\Z:\left|\frac{k}{q_i^{\beta_j}pp'}\right|\leq q_i^{-(s+\epsilon_i)/d}\}\\
        \leq &4 pp'q_i^{2\beta_j-(s+\epsilon_i)/d}\leq 4 q_i^{2\gamma+2\beta_j-(s+\epsilon_i)/d}.
    \end{align*}
    Putting all $j$ together, we have that for all primes $p\neq p'$ in $(q_i^\gamma/2, q_i^\gamma)$,
    $$\#\left\{(\vec{m}, \vec{m}')\in \prod_{j=1}^d[0,pq_i^{\beta_j}]\times\prod_{j=1}^d[0,p'q_i^{\beta_j}]: \left|\frac{m_j}{q_i^{\beta_j}p}-\frac{m_j'}{q_i^{\beta_j}p'}\right|\leq q_i^{-(s+\epsilon_i)/d}, \;\forall j \right\}$$
    $$\leq \prod_{j=1}^d 4 q_i^{2\gamma+2\beta_j-(s+\epsilon_i)/d}=4^d q_i^{2d\gamma+2\sum_{j=1}^d\beta_j-s-\epsilon_i}= 4^d q_i^{s-2\gamma-\epsilon_i}.$$
    Fix $p\in\mathcal{P}_i^\gamma$ and let $p'\in \mathcal{P}_i^\gamma$ vary, it follows that for each $p\in\mathcal{P}_i^\gamma$,
    \begin{align*}
        \#\left\{\vec{m}\in \prod_{j=1}^d[0,pq_i^{\beta_j}]: \exists\,p'\in\mathcal{P}_i^\gamma, s.t., \dist\left(\frac{m_j}{q_i^{\beta_j}p}, \frac{\Z}{q_i^{\beta_j}p'}\right)\leq q_i^{-(s+\epsilon_i)/d}, \;\forall j \right\}
    \end{align*}
    $$\leq C_d\, q_i^{s-2\gamma-\epsilon_i}\cdot q_i^\gamma/\log q_i^\gamma=C_d\, q_i^{s-\gamma-\epsilon_i}/\log q_i^\gamma.$$
    By removing cubes centered at these $\vec{m}$ from $E_{i,p}$, we obtain a subset set $E_{i,p}'\subset E_{i,p}$, with
    \begin{equation}\label{large-subset-p}\#\{q_i^{-1}\text{-cubes in }E_{i,p}\backslash E_{i,p}'\}\leq C_d\, q_i^{s-\gamma-\epsilon_i}/\log q_i^\gamma\end{equation}
    and all $q_i^{-1}$-cubes in $\cup_{p\in\mathcal{P}_i^\gamma}E_{i,p}'$ are $q_i^{-(s+\epsilon_i)/d}$-separated.
    
    Finally take
    $$E_i'':=\bigcup_{p\in\mathcal{P}_i^\gamma}E_{i,p}'.$$
    By \eqref{large-subset-p} and the prime number theorem,
    \begin{equation}\label{large-subset}\begin{aligned}\#\{q_i^{-1}\text{-cubes in }E_i'\backslash E_i''\}\leq \sum_{p\in\mathcal{P}_i^\gamma}\#\{q_i^{-1}\text{-cubes in }E_{i,p}\backslash E_{i,p}'\}\\\leq C_d\, q_i^{s-\gamma-\epsilon_i}/\log q_i^\gamma\cdot q_i^{\gamma}/\log q_i^\gamma= C_d\, q_i^{s-\epsilon_i}/(\log q_i^\gamma)^2,\end{aligned}\end{equation}
    negligible to the total number of cubes in $E_i'$ (recall \eqref{total-number-cubes}). 
    
    As a summary, one can find a subset $E_i''\subset E_i'$ that consists of $c_d \,q_i^s/\log q_i^\gamma$ many $q_i^{-(s+\epsilon_i)/d}$-separated $q_i^{-1}$-cubes satisfying \eqref{large-subset}.
    
    We need more discussion on $E_i''$ before constructing a desired Frostman measure. Let $Q\subset [0, 1)^d$ be a $q_{i-1}^{-1}$-cube in $E_{i-1}''$ and consider the number of $q_i^{-1}$-cubes in $E_i''\cap Q$.

    The upper bound is again easy by the lattice structure of $E_{i,p}$ defined in \eqref{def-E-i-p}:
    \begin{equation}\label{counting-upper-gamma>0}\begin{aligned}\#\{q_i^{-1}\text{-cubes in }E_i''\cap Q\}&\leq \sum_p \#\{q_i^{-1}\text{-cubes in }E_{i,p}\cap Q\}\\&\leq C_d\, q_i^\gamma/\log q_i^\gamma\prod_{j=1}^d q_{i-1}^{-1}q_i^{\gamma+\beta_j}\\&= C_d\,q_{-1}^{-d} q_i^s/\log q_i^\gamma.\end{aligned}\end{equation}
    For the lower bound, by the separation on cubes in $E_{i}''$, the lattice structure of $E_{i,p}$ defined in \eqref{def-E-i-p}, and \eqref{large-subset-p}, we have
    \begin{equation}\label{first-term-second-term}\begin{aligned}
    	&\#\{q_i^{-1}\text{-cubes in }E_i''\cap Q\}\\=&\sum_p \#\{q_i^{-1}\text{-cubes in }E_{i,p}'\cap Q\}\\\geq& \sum_p(\#\{q_i^{-1}\text{-cubes in }E_{i,p}\cap Q\}-\#\{q_i^{-1}\text{-cubes in }E_{i,p}\backslash E_{i,p}'\})\\\geq & c_d (q_{i-1}^{-d}q_i^s/\log q_i^\gamma-q_i^{s-\epsilon_i}/(\log q_i^\gamma)^2).
    	\end{aligned}
    \end{equation}    
    Here the intersection $E_{i,p}\cap Q$ is nonempty because of $q_i>\max_j \{q_{i-1}^{\frac{1}{\gamma+\beta_j}}\}$ from our assumption \eqref{how-fast}.

    Recall $\epsilon_i=\min\{\frac{1}{i}, d-s\}$ and $q_i>q_{i-1}^{10di}$ from \eqref{how-fast}. So the second term in the last line of \eqref{first-term-second-term} is negligible  and therefore
    \begin{equation}\label{counting-lower-gamma>0}\#\{q_i^{-1}\text{-cubes in }E_i''\cap Q\} \geq c_d\,q_{i-1}^{-d}q_i^s/\log q_i^\gamma.\end{equation}
    
    Now one can construct our Frostman measure on $E$ in a similar way as $\gamma=0$. Let $F_0=[0,1]^d$. Once $F_{i-1}$ is defined, by \eqref{counting-lower-gamma>0} for every $q_{i-1}^{-1}$-cube $Q$ in $F_{i-1}$ one can pick exactly 
    \begin{equation}\label{exact-number-cubes-picked}c_d\,q_{i-1}^{-d} q_i^s/\log q_i^\gamma\end{equation} many $q_i^{-1}$-cubes in each $E_i''\cap Q$, and call the union of these $q_i^{-1}$-cubes $F_i$. In particular the total number of $q_i^{-1}$-cubes in each $F_i$ is exactly
    \begin{equation}
    	\label{total-i-0}
    	\prod_{k=1}^{i}c_d\,q_{k-1}^{-d}\,q_k^s/\log q_k^\gamma.
    \end{equation}

    Then we define
$$\mu_i=\mathcal{H}^d(F_i)^{-1}\cdot\mathcal{H}^d|_{F_i},$$
where $\mathcal{H}^d$ denote the $d$-dimensional Hausdorff measure. As each $q_{i-1}^{-1}$-cube $Q$ in $F_{i-1}$ contains the same amount of $q_i^{-1}$-cubes from $F_i'$, for each $Q$ in $F_i$,
    \begin{equation}\label{invariance-m-geq-i}\mu_m(Q)=\left(\prod_{k=1}^{i}c_d\,q_{k-1}^{-d}\,q_k^s/\log q_k^\gamma\right)^{-1},\ \forall\, m\geq i.\end{equation} 
    In particular $\mu(Q)=\mu_i(Q)$ for every $q_i^{-1}$-cube in $F_i$, which guarantees that the weak limit of $\mu_i$ exists, denoted by $\mu$. Then $\mu$ is a probability measure supported on $\cap_{i=1}^\infty F_i\subset E$.

The proof then goes like the case $\gamma=0$. Keep in mind that our $q_i^{-1}$-cubes are $q_i^{-(s+\epsilon_i)/d}$-separated, with $\epsilon_i=\min\{\frac{1}{i}, d-s\}$.

Let $B(x,r)$ an arbitrary $r$-ball. Then there exists $i_0$ such that $$q_{i_0}^{-\frac{s+\epsilon_{i_0}}{d}}\leq r<q_{i_0-1}^{-\frac{s+\epsilon_{i_0-1}}{d}}.$$
    As $\epsilon_i=\min\{\frac{1}{i}, d-s\}$, we have $q_{i_0}^{-1}\leq r$ and therefore by the discussion after \eqref{invariance-m-geq-i},
    \begin{equation}
    	\label{reduction-to-i-0}
    	\mu(B(x,r))\leq \mu(\bigcup_{\substack{Q\cap B(x,r)\neq\emptyset\\ q_{i_0}^{-1}\text{-cubes in } F_{i_0}}} Q)=\mu_{i_0}(\bigcup_{\substack{Q\cap B(x,r)\neq\emptyset\\ Q \text{ in } F_{i_0}}} Q)\leq \mu_{i_0}(B(x,C_d r)),
    \end{equation}
    and the problem is reduced to counting $F_{i_0}\cap B(x,r)$. There are two ways. First, by the lattice structure of $E_{i_0,p}$, we have
    \begin{equation}\label{upper-r}\begin{aligned}&\#\{q_{i_0}^{-1}\text{-cubes in }F_{i_0}\cap B(x,r)\}\\\leq &\sum_{p\in\mathcal{P}_i^\gamma}\#\{q_{i_0}^{-1}\text{-cubes in }E_{i_0, p}\cap B(x,r)\}\\\leq &C_d q_{i_0}^\gamma/\log q_{i_0}^{\gamma}\prod_{j=1}^d r q_{i_0}^{\gamma+\beta_j}= C_d\,r^d q_{i_0}^s/\log q_{i_0}^{\gamma}.\end{aligned}\end{equation}
    On the other hand, as cubes in $F_{i_0-1}$ are $q_{i_0-1}^{-\frac{s+\epsilon_{i_0-1}}{d}}$-separated, every $B(x,r)$ can intersect at most $C_d$ many $q_{i_0-1}^{-1}$-cubes from $F_{i_0-1}$, so by \eqref{counting-upper-gamma>0},
    \begin{equation}\label{upper-q-i-0}\#\{q_{i_0}^{-1}\text{-cubes in }F_{i_0}\cap B(x,r)\}\leq C_d\,q_{i_0-1}^{-d} q_{i_0}^s/\log q_{i_0}^{\gamma}.\end{equation}
    Then we interpolate \eqref{upper-r} and \eqref{upper-q-i-0} to conclude that, with $s_{i_0}:=s-\frac{1}{i_0}\in(0,d)$,
    \begin{equation}\label{upper-final}\#\{q_{i_0}^{-1}\text{-cubes in }F_{i_0}'\cap B(x,r)\}\leq C_d\,r^{s_{i_0}} q_{i_0-1}^{-d+s_{i_0}} q_{i_0}^{s}/\log q_{i_0}^{\gamma}.\end{equation}
    
    By \eqref{total-i-0}, \eqref{upper-final} and our definition of $\mu_i$,
    \begin{align*}
        &\mu_{i_0}(B(x, r))\\ \leq & C_d\,r^{s_{i_0}}\,q_{i_0-1}^{s_{i_0}-d}\,\frac{q_{i_0}^s}{\log q_{i_0}^{\gamma}}\left(\prod_{i=1}^{i_0}c_d\,q_{i-1}^{-d}q_i^s/\log q_i^{\gamma}\right)^{-1}\\
        =&C_d \, r^{s_{i_0}}\, q_{i_0-1}^{s_{i_0}}\prod_{i=1}^{i_0-1}c_d^{-1} q_{i-1}^{d}q_i^{-s} \log q_i^{\gamma}\\=&C_d\, r^{s_{i_0}}\,q_{i_0-1}^{-\frac{1}{i_0}}\log q_{i_0-1}^{\gamma}\prod_{i=1}^{i_0-2}c_d^{-1}\cdot q_{i}^s\log q_i^{\gamma},
    \end{align*}
    which is
    $$\leq C_d\, r^{s-\frac{1}{i_0}} $$
    because $q_i>q_{i-1}^{10di}$ from the assumption \eqref{how-fast}. Together with \eqref{reduction-to-i-0} we can conclude that
    $$\mu(B(x,r))\leq C_{d}\, r^{s-\frac{1}{i}},\ \forall\, q_{i}^{-\frac{s+\frac{1}{i}}{d}}\leq r<q_{i-1}^{-\frac{s+\frac{1}{i-1}}{d}}.$$
    This implies that for every $s'<s$, there exists a constant $C_{s,d}$ such that
        $$\mu(B(x,r))\leq C_{s,d}\,r^{s'},\ \forall\,x\in\R^d,\,r>0,$$
        as desired.

\section{Orthogonal projection and sum-product}\label{sec-proj-sum-product}
\subsection{In the plane}
We prove Theorem \ref{thm-sharp-ABC} first. The upper bound $1$ and $s_A+s_B$ are both trivial. To obtain $\frac{s_A+s_B+s_C}{2}$, we take $A, B$ to be as Theorem \ref{thm-our-example} with
$$\gamma_A=\gamma_B=0,\ \beta_A=s_A,\  \beta_B=s_B,$$
and $C\subset [0,1]$ as Theorem \ref{thm-our-example} with 
$$\gamma_C=\frac{s_C+s_B-s_A}{2},\ \beta_C=s_A-s_B,$$
under the same sequence $\{q_i\}$. Then for every $c\in C$ the set $A+cB$ is contained in the $q_i^{-1}$-neighborhood of 
$$\frac{\Z}{q_i^{s_A}}+\frac{n_0}{Hq_i^{s_A-s_B}}\cdot \frac{\Z}{q_i^{s_B}}\subset \frac{\Z}{Hq_i^{s_A}}$$
in $[0,2]$, for some integer $1\leq H\leq q_i^{\frac{s_C+s_B-s_A}{2}}$. Hence it can be covered by no more than $$2Hq_i^{s_A}\leq 2q_i^{\frac{s_A+s_B+s_C}{2}}$$ many intervals of length $q_i^{-1}$. 

Consequently, 
$$\dH(A+cB)\leq \frac{s_A+s_B+s_C}{2},\ \forall\,c\in C,$$
as desired.

\subsection{Higher dimensions: codimension $1$}
Now we prove Theorem \ref{thm-codimension-1}. Take $A_j, 1\leq j\leq d$, as in Theorem \ref{thm-our-example} with $\gamma=0, \beta=s_j$, and $E\subset\R^{d-1}$ as in Theorem \ref{thm-our-example} with
$$\gamma=\max\{\frac{t-\sum_{j=2}^d(s_j-s_1)}{d}, 0\}, \, \beta_j=s_{j+1}-s_1,\ j=1,\dots,d-1,$$
under the same sequence $q_i$. Then the set
$$\mathcal{V}:=\{V\in G(d,d-1):V^\perp\cap(\{1\}\times E)\neq\emptyset\}$$
has Hausdorff dimension $\dH\V=\dH E=t$.

Now the normal of every element $V\in \V$ lies in the $q_i^{-1}$-neighborhood of
$$(1, \frac{n_2}{Hq_i^{s_2-s_1}},\dots, \frac{n_d}{Hq_i^{s_d-s_1}}),\ \text{ for some }0\leq n_j\leq Hq_i^{s_j-s_1},$$
and all points of $A_1\times\cdots\times A_d$ lies in the $q_i^{-1}$-neighborhood of
$$(\frac{m_1}{q_i^{s_1}},\dots, \frac{m_d}{q_i^{s_d}}),\ 0\leq m_j\leq q_i^{s_j}.$$
Write $m_1=k H +h$, where $0\leq k\leq q_i^{s_1}/H$ and $0\leq h<H$, then
$$(\frac{m_1}{q_i^{s_1}},\dots, \frac{m_d}{q_i^{s_d}})=(\frac{kH+h}{q_i^{s_1}}, \frac{m_2}{q_i^{s_2}}, \dots, \frac{m_d}{q_i^{s_d}})$$
$$=\frac{kH}{q_i^{s_1}}\cdot(1,\frac{n_2}{Hq_i^{s_2-s_1}},\dots, \frac{n_d}{Hq_i^{s_d-s_1}})+(\frac{h}{q_i^{s_1}},\frac{m_2-kn_2}{q_i^{s_2}}, \dots, \frac{m_d-kn_d}{q_i^{s_d}}).$$
This implies the image of $\pi_V$ is determined by the second term in this sum. In other words  $\pi_V(A_1\times\cdots\times A_d)$ is contained in the $q_i^{-1}$-neighborhood of
$$\pi_V(\frac{[0,H]\cap\Z}{q_i^{s_1}}\times\frac{[-q_i^{s_2},q_i^{s_2}]\cap\Z}{q_i^{s_2}}\times\cdots\times\frac{[-q_i^{s_d},q_i^{s_d}]\cap\Z}{q_i^{s_d}}),$$
which, by trivial counting, can be covered by
$$\lesssim H q_i^{s_2+\cdots+s_d}\leq \begin{cases}
    q_i^{s_2+\cdots+s_d}, & \text{ if } t<\sum_{j=2}^d(s_j-s_1)\\q_i^{\frac{(d-1)(s_1+\cdots+s_d)+t}{d}}, & \text{ otherwise }
\end{cases} $$
many intervals of length $q_i^{-1}$.

Consequently,
$$\dH\pi_V(A_1\times\cdots\times A_d)\leq \begin{cases}
    s_2+\cdots+s_d, & \text{ if } t\leq \sum_{j=2}^d(s_j-s_1)\\\frac{(d-1)(s_1+\cdots+s_d)+t}{d}, & \text{ otherwise} 
\end{cases}
$$
for all $V\in\V$, as desired.
\subsection{Higher dimensions: codimension $>1$}\label{subsec-codimension>1}
As promised in the introduction we make the list for $d=3, n=1$.

\begin{prop}\label{thm-d=3-n=1}
    For all $t\in(0,2)$ and $1>s_1\geq s_2\geq s_3>0$ satisfying $t>2s_1-s_2-s_3\geq 0$, there exist Borel sets $A_1, A_2, A_3\subset\R$ and $\Omega\subset S^2$ not contained in a subspace, with $\dH A_i=s_i, 1\leq i\leq 3$, $\dH\Omega=t$, such that for all $e\in\Omega$,
	$$\dH \pi_e(A_1\times A_2\times A_3)\leq\min\{\frac{s_1+s_2+s_3+t}{3}, f(s_1,s_2,s_3,t), \sum s_i, 1\},$$
 where $f$ is a piecewise linear function
 $$f(s_1,s_2,s_3,t):=\begin{cases}
     s_1+s_3, & t\leq 1+s_1-s_2\\\frac{s_1+s_2+t-1}{2}+s_3, & t\geq 1+s_1-s_2
 \end{cases}.$$
\end{prop}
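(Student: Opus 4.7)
The plan is to mimic the proofs of Theorems~\ref{thm-sharp-ABC} and~\ref{thm-codimension-1} above, taking the factors $A_j$ as pure Jarnik sets from Theorem~\ref{thm-our-example} and building $\Omega$ from a two-dimensional Diophantine set $E\subset[0,1]^2$, also from Theorem~\ref{thm-our-example}. More precisely, I would let each $A_j\subset[0,1]$ come from Theorem~\ref{thm-our-example} with $\gamma=0$, $\beta=s_j$, and let $E$ have parameters
$$\gamma_E=\tfrac{t-2s_1+s_2+s_3}{3},\qquad \beta_2=s_1-s_2,\qquad \beta_3=s_1-s_3,$$
so that $3\gamma_E+\beta_2+\beta_3=t$ and hence $\dH E=t$; the hypothesis $t>2s_1-s_2-s_3$ exactly guarantees $\gamma_E>0$. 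Setting $\Omega:=\{(1,c_2,c_3)/\|(1,c_2,c_3)\|:(c_2,c_3)\in E\}\subset S^2$ gives $\dH\Omega=t$, and since $E$ is genuinely two-dimensional, $\Omega$ spans $\R^3$ and is not contained in any $2$-plane.

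For each $e\in\Omega$ and each scale $i$ one has $H\leq q_i^{\gamma_E}$ and integers $n_j\in[0,Hq_i^{\beta_j}]$ with $c_j$ at distance $O(q_i^{-1})$ from $n_j/(Hq_i^{\beta_j})$, so that
$$\pi_e(A_1\times A_2\times A_3)\subset\N_{O(q_i^{-1})}\!\Bigl\{\tfrac{Hm_1+n_2m_2+n_3m_3}{Hq_i^{s_1}}:0\leq m_j\leq q_i^{s_j}\Bigr\}.$$
Each numerator is a non-negative integer of size $O(Hq_i^{s_1})$, so there are at most $Hq_i^{s_1}\leq q_i^{\gamma_E+s_1}=q_i^{(s_1+s_2+s_3+t)/3}$ distinct values, and as long as $\gamma_E+s_1<1$ each defines a distinct $q_i^{-1}$-interval; this produces the bound $(s_1+s_2+s_3+t)/3$. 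The other two entries in the minimum are immediate: $\sum_j s_j$ comes from counting the $q_i^{s_1+s_2+s_3}$ lattice triples, and $1$ from the image lying in $\R$.

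For the remaining $f$ bound I would use the Minkowski decomposition $A_1+c_2A_2+c_3A_3=(A_1+c_2A_2)+c_3A_3$. Because every one of our sets is a $q_i^{-1}$-neighborhood of an arithmetic progression, Hausdorff and upper-box dimensions coincide, and the sub-additive Minkowski inequality yields $\dH\pi_e(A_1\times A_2\times A_3)\leq\dH(A_1+c_2A_2)+s_3$. The strategy is to bound $\dH(A_1+c_2A_2)$ via Theorem~\ref{thm-sharp-ABC}. In the regime $t\geq 1+s_1-s_2$, placing $c_2$ inside a one-dimensional Diophantine fiber of $E$ of dimension $t-1$ gives $\dH(A_1+c_2A_2)\leq(s_1+s_2+t-1)/2$; in the regime $t\leq 1+s_1-s_2$ the fiber dimension falls below $s_1-s_2$, so $A_1+c_2A_2$ stays inside the lattice $\Z/(Hq_i^{s_1})$ with $H$ essentially of bounded size, and dimension at most $s_1$ follows from direct containment. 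Adding $s_3$ in both regimes produces $f$.

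The main obstacle I anticipate is this slicing. The naive projection of $\Omega$ onto the $c_2$-axis has dimension $2\gamma_E+\beta_2=(2t-s_1-s_2+2s_3)/3$, generally strictly larger than $t-1$, so the sharper $f$ bound does not follow from applying Theorem~\ref{thm-sharp-ABC} to $\pi_{c_2}(\Omega)$ directly. To extract the $t-1$ one must exploit the synchronized-$H$ structure of $E$---the fact that the same integer $H$ controls $c_2$ and $c_3$ at each scale---and slice along $c_3$, verifying that the $c_2$-fiber over each $c_3$ is again a Diophantine set of the Theorem~\ref{thm-sharp-ABC}-compatible form at the expected dimension. This synchronization bookkeeping is where most of the technical work should go; the geometric measure theory inputs themselves are already present in Theorems~\ref{thm-our-example} and~\ref{thm-sharp-ABC}.
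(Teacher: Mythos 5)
Your construction for the $\tfrac{s_1+s_2+s_3+t}{3}$ bound is exactly the paper's (same choice of $\gamma_E$, $\beta_2$, $\beta_3$ and the same lattice-counting argument), so that part is fine. The gap is in the $f$ bound, and it is not merely "bookkeeping" that is missing---the strategy you outline cannot succeed.

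You try to extract $f$ from the \emph{same} synchronized pair $(A_1,A_2,A_3,\Omega)$ already built for the $\tfrac{\sum s_i+t}{3}$ bound, by slicing $\Omega$ along $c_3$ and feeding the $c_2$-fiber into Theorem~\ref{thm-sharp-ABC}. But in that construction, the admissible $c_2$'s at scale $q_i$ are near $\tfrac{n_2}{Hq_i^{\beta_2}}$ for \emph{some} $1\le H\le q_i^{\gamma_E}$, $\gamma_E=\tfrac{t-2s_1+s_2+s_3}{3}$; this is a Diophantine set of exponent $\gamma_E$, whereas the Theorem~\ref{thm-sharp-ABC} bound $\tfrac{s_1+s_2+(t-1)}{2}$ requires $C$ with exponent $\gamma_C=\tfrac{(t-1)+s_2-s_1}{2}$. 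When $\gamma_E>\gamma_C$ (which is the generic regime where $f$ is the active term in the minimum), the set of $c_2$ has strictly too many admissible $H$'s, and consequently $\dH(A_1+c_2A_2)$ can be as large as $\gamma_E+s_1$, not $\gamma_C+s_1$. Slicing along $c_3$ cannot fix this: the fiber of $E$ over a fixed $c_3$ still picks up every $H\le q_i^{\gamma_E}$, because for any such $H$ the paired $c_3$-coordinate $\tfrac{n_3}{Hq_i^{\beta_3}}$ ranges over the full $c_3$-progression, so the fiber is never thinner than the $c_2$-marginal itself.

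The paper resolves this by building a \emph{different} example for the $f$ regime, namely the Cartesian-product direction set of Proposition~\ref{prop-prod-direction}: take $\Omega$ normalized from $\{1\}\times B_1\times B_2$ with $B_2$ a full interval ($t_2=1$) and $\dH B_1=t-1$, together with fresh pure-Jarnik $A_j$'s. Then each $b_1\in B_1$ gives $\dH(A_1+b_1A_2)\le\max\{\tfrac{(t-1)+s_2-s_1}{2},0\}+s_1$ and adding the trivial $+s_3$ from $b_2A_3$ yields $f$. The $\min\{\cdots\}$ in the statement is therefore achieved by choosing whichever of the two constructions gives the smaller bound for the given $(s_1,s_2,s_3,t)$; there is no single construction that realizes all four terms simultaneously, and you should not try to derive $f$ from the synchronized one.
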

Here the role of $t>2s_1-s_2-s_3\geq 0$ is the same as \eqref{C>A-B}, otherwise there exists a construction with $\pi_e(A_1\times A_2\times A_3)=\dH A_1, \forall e\in\Omega$. We leave details to interested readers. As $\sum s_i$ and $1$ are trivial bounds, we only compare $\frac{s_1+s_2+s_3+t}{3}$ and $f(s_1, s_2, s_3, t)$. It turns out to be quite complicated. See the figure below. 
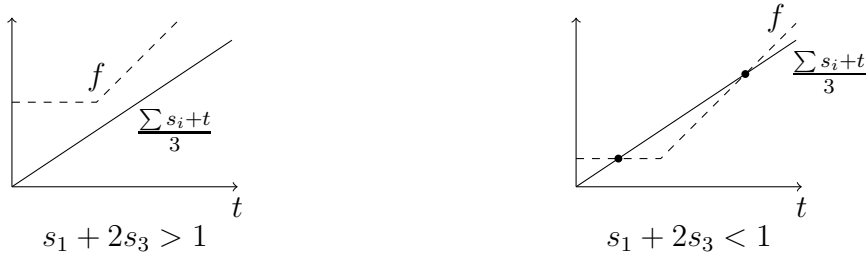
\begin{figure}[H]
\centering
\begin{tikzpicture}[scale=0.75, p2/.style={line width=0.275mm, black}, p3/.style={line width=0.15mm, black!50!white}]

\draw[->] (10, 0) -- (10, 3);
\draw[->] (10, 0) -- (14, 0);
\draw[-] (10, 0) -- (13.9, 2.6);
\draw[dashed] (10,0.5) -- (11.5, 0.5) -- (13.9, 2.9);

\fill (10.75,0.5) circle (0.07);
\fill (13,2) circle (0.07);

\draw (14, 0) node[anchor=north]{$t$};
\draw (12, -0.5) node[anchor=north]{$s_1+2s_3<1$};
\draw (13.9, 3) node[anchor=east]{$f$};
\draw (13.6, 2.6) node[anchor=north west]{$\frac{\sum s_i +t}{3}$};

\draw[->] (0, 0) -- (0, 3);
\draw[->] (0, 0) -- (4, 0);
\draw[-] (0, 0) -- (3.9, 2.6);
\draw[dashed] (0,1.5) -- (1.5, 1.5) -- (3,3);

\draw (2, -0.5) node[anchor=north]{$s_1+2s_3>1$};
\draw (4, 0) node[anchor=north]{$t$};
\draw (1.5, 1.5) node[anchor=south]{$f$};
\draw (2, 1) node[anchor=west]{$\frac{\sum s_i+t}{3}$};

\end{tikzpicture}
\caption{$\frac{\sum s_i+t}{3}$ and $f(s_1, s_2, s_3, t)$ as functions of $t$}
\label{fig:range-p}
\end{figure}

We also mention in the introduction about the Cartesian product structure on the direction set. Results for $d=3, n=1$ is given here for comparison with Proposition \ref{thm-d=3-n=1}. This is also where the piecewise linear function $f$ comes from.
\begin{prop}
    \label{prop-prod-direction}
    For all $t_1, t_2\in(0,1)$ and $1>s_1\geq s_2\geq s_3>0$, there exist Borel sets $A_1, A_2, A_3, B_1, B_2\subset\R$, with $\dH A_i=s_i, i=1,2,3$, $\dH B_j=t_j, j=1,2$, such that for all $b_j\in B_j, j=1,2$,
	$$\dH (A_1+b_1A_2+b_2A_3)$$ $$\leq\min\left\{s_1+\sum_{j=1}^2\min\left\{\max\{\frac{t_j+s_{j+1}-s_1}{2}, 0\}, s_j\right\}, 1\right\}.$$
 In particular, the upper bound is $s_1+s_3$ when $t_1\leq s_1-s_2$, $t_2=1$ and $\frac{s_1+s_2+t_1}{2}+s_3$ when $t_1\geq s_1-s_2$, $t_2=1$.
\end{prop}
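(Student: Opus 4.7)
The plan is to mirror the construction from the planar ABC case (proof of Theorem~\ref{thm-sharp-ABC}) but with two direction sets $B_1,B_2$ that both align with the common ``denominator'' $q_i^{s_1}$ coming from $A_1$. I would take $A_j\subset[0,1]$ from Theorem~\ref{thm-our-example} with $d=1$, $\gamma=0$, $\beta=s_j$, so that $A_j\subset\mathcal{N}_{q_i^{-1}}(\Z/q_i^{s_j})$ at every scale and $\dH A_j=s_j$. For $j=1,2$, take $B_j\subset[0,1]$ from Theorem~\ref{thm-our-example} with $d=1$ and
$$\gamma_{B_j}=\max\Bigl\{\tfrac{t_j+s_{j+1}-s_1}{2},\,0\Bigr\},\qquad \beta_{B_j}=t_j-2\gamma_{B_j},$$
so that $\dH B_j=t_j$. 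All five sets would share a single rapidly increasing sequence $\{q_i\}$, sparse enough for the growth hypotheses of Theorem~\ref{thm-our-example} to hold simultaneously and (exactly as in Section~\ref{sec-Hausdorff}) to ensure that the relevant powers $q_i^{s_j}$, $q_i^{\beta_{B_j}}$ and $q_i^{s_1-s_{j+1}}$ are essentially integers producing commensurable lattices.

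The core covering estimate would then come from the following: for any $b_j\in B_j$, by construction $b_j$ lies within $q_i^{-1}$ of some $m_j/(H_j q_i^{\beta_{B_j}})$ with $1\le H_j\le q_i^{\gamma_{B_j}}$. When $t_j\ge s_1-s_{j+1}$, $\beta_{B_j}=s_1-s_{j+1}$ and
$$b_j\cdot\frac{n_{j+1}}{q_i^{s_{j+1}}}\approx\frac{m_j n_{j+1}}{H_j\, q_i^{s_1}},$$
so $b_j A_{j+1}$ sits in the $O(q_i^{-1})$-neighborhood of $\Z/(H_j q_i^{s_1})$; when $t_j<s_1-s_{j+1}$, $\gamma_{B_j}=0$ and $b_j A_{j+1}$ lies in the coarser sublattice $\Z/q_i^{t_j+s_{j+1}}\subset\Z/q_i^{s_1}$. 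Summing the three terms, $A_1+b_1A_2+b_2A_3$ is contained in the $O(q_i^{-1})$-neighborhood of $\Z/(H_1 H_2 q_i^{s_1})$, which is covered by $\lesssim H_1H_2\,q_i^{s_1}\le q_i^{s_1+\gamma_{B_1}+\gamma_{B_2}}$ intervals of length $q_i^{-1}$, producing the main term of the upper bound.

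To obtain the cap on each summand, I would alternatively regard $b_j A_{j+1}$ as an affine image of $A_{j+1}$, hence with covering $\lesssim q_i^{s_{j+1}}$, and combine this with the lattice estimate for $A_1+b_k A_{k+1}$ at the other index $k\neq j$; optimizing between this alternative and the main estimate yields the cap in each summand. The outer $\min$ with $1$ is immediate because $A_1+b_1A_2+b_2A_3$ lives in a bounded interval, and the ``in particular'' assertion follows by substituting $t_2=1$ and simplifying the two cases $t_1\lessgtr s_1-s_2$.

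The main technical obstacle is the lattice commensurability alluded to above: the ``identity'' $b_j\cdot(n_{j+1}/q_i^{s_{j+1}})=m_j n_{j+1}/(H_j q_i^{s_1})$ really needs $q_i^{s_1-s_{j+1}}\in\Z$, and similarly in the coarse case one needs $q_i^{s_1-t_j-s_{j+1}}\in\Z$. Exactly as in Section~\ref{sec-Hausdorff}, this is handled by selecting $\{q_i\}$ sparsely along values where each relevant exponent is integer (or within a harmless $O(1)$ factor of one), which does not affect the final Hausdorff dimension estimate.
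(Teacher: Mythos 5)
Your construction agrees with the paper's in the main case $t_j\ge s_1-s_{j+1}$, where both give $\beta_{B_j}=s_1-s_{j+1}$, and the covering argument via containment in $\Z/(H_1H_2q_i^{s_1})$ is exactly the paper's. The genuine gap is in the coarse case $t_j<s_1-s_{j+1}$, where you take $\gamma_{B_j}=0$ and $\beta_{B_j}=t_j$, so that $b_jA_{j+1}$ sits near $\Z/q_i^{t_j+s_{j+1}}$ rather than $\Z/q_i^{s_1}$. For $A_1+b_jA_{j+1}$ to be covered by $\lesssim q_i^{s_1}$ intervals you then need $\Z/q_i^{t_j+s_{j+1}}\subset\Z/q_i^{s_1}$, i.e.\ $q_i^{s_1-t_j-s_{j+1}}\in\Z$; otherwise $\Z/q_i^{s_1}+\Z/q_i^{t_j+s_{j+1}}$ is an incommensurable sum of lattices, can be dense in $[0,1]$, and the covering count collapses to the trivial $\lesssim q_i$. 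Your proposed remedy --- choosing $q_i$ so that the relevant powers are (nearly) integers --- does not survive scrutiny: you would need $q_i^{s_1-t_1-s_2}$, $q_i^{s_1-t_2-s_3}$ (and possibly more) simultaneously integral along an unbounded sequence, and when those exponents are rationally independent this is impossible for $q_i>1$; moreover ``within an $O(1)$ factor of an integer'' does not give lattice containment, since the sum $\Z a+\Z b$ is governed by the exact rational relation between $a$ and $b$, not by its magnitude.

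The paper sidesteps this entirely by keeping $\beta_{B_j}=s_1-s_{j+1}$ in all cases, so that every term of $A_1+b_1A_2+b_2A_3$ lands on the single lattice $\Z/(H_1H_2q_i^{s_1})$ without any integrality hypothesis on $q_i$. The price is that when $t_j<s_1-s_{j+1}$ this construction yields $\dH B_j=s_1-s_{j+1}>t_j$; that is harmless, since one may pass to a Borel subset of $B_j$ of Hausdorff dimension exactly $t_j$ (the covering conclusion only improves on a subset), but it is a step your write-up should make explicit rather than altering $\beta_{B_j}$. Separately, note that your per-summand cap, obtained by viewing $b_jA_{j+1}$ as an affine image of $A_{j+1}$, yields the bound $s_{j+1}$, whereas the proposition's displayed formula caps by $s_j$; as written, your argument does not produce that particular truncation.
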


We give the proof of Proposition \ref{prop-prod-direction} first, then prove Proposition \ref{thm-d=3-n=1}.

\begin{proof}[Proof of Proposition \ref{prop-prod-direction}]
The upper bounds $\sum s_i$ and $1$ are trivial. To obtain $s_1+\sum_{j=1}^2\max\{\frac{t_j+s_{j+1}-s_1}{2}, 0\}$, take $A_i\subset[0,1]$ as in Theorem \ref{thm-our-example} with 
$$\gamma_{A_i}=0,\ \beta_{A_i}=s_i,$$
and $B_j\subset[0,1]$ as in Theorem \ref{thm-our-example} with $$\gamma_{B_j}=\max\{\frac{t_j+s_{j+1}-s_1}{2}, 0\}\ \beta_{B_j}=s_1-s_{j+1},$$
under the same sequence $\{q_i\}$. Then for every $b_j\in B_j$, the set $A_1+b_1A_2+b_2A_3$ is contained in the $q_i^{-1}$-neighborhood of
$$\frac{\Z}{q_i^{s_1}}+\frac{n_1}{H_1q_i^{s_1-s_2}}\cdot \frac{\Z}{q_i^{s_2}}+\frac{n_3}{H_2q_i^{s_1-s_3}}\cdot \frac{\Z}{q_i^{s_3}}\subset \frac{\Z}{H_1H_2 q_i^{s_1}}$$
in $[0,3]$, for some integers $1\leq H_j\leq q_i^{\frac{t_j+s_{j+1}-s_1}{2}}$. Hence it can be covered by no more than $$3\,q_i^{s_1+\sum_{j=1}^2\max\{\frac{t_j+s_{j+1}-s_1}{2}, 0\}}$$
intervals of length $q_i^{-1}$, as desired.
\end{proof}

\begin{proof}[Proof of Proposition \ref{thm-d=3-n=1}]
The upper bounds $\sum s_i$ and $1$ are trivial. To obtain $\frac{s_1+s_2+s_3+t}{3}$, take $A_i$ to be as Theorem \ref{thm-our-example} with
$$\gamma=0,\ \beta=s_i,$$
and $E\subset [0,1]^2$ as Theorem \ref{thm-our-example} with 
$$\gamma=\frac{t-(s_1-s_2)-(s_1-s_3)}{3},\ \beta_1=s_1-s_2,\ \beta_2=s_1-s_3,$$
under the same sequence $\{q_i\}$. Then lines determined by vectors in $\{1\}\times E$, namely
$$\Omega:=\{e\in S^1: \R e\cap (\{1\}\times E)\neq\emptyset\}$$
has Hausdorff dimension $\dH \Omega=\dH E=t$.

Similar to the previous subsection, for every $e\in \Omega$, the set $\pi_e(A_1\times A_2\times A_3)$ is contained in the $q_i^{-1}$-neighborhood of 
$$\frac{\Z}{q_i^{s_1}}+\frac{n_1}{Hq_i^{s_1-s_2}}\cdot \frac{\Z}{q_i^{s_2}}+\frac{n_2}{Hq_i^{s_1-s_3}}\cdot \frac{\Z}{q_i^{s_3}}\subset \frac{\Z}{Hq_i^{s_1}}$$
in $[0,3]$, for some integer $1\leq H\leq q_i^{\frac{t-(2s_1-s_2-s_3)}{3}}$. Hence it can be covered by no more than $$3Hq_i^{s_1}\leq 3q_i^{\frac{s_1+s_2+s_3+t}{3}}$$ many intervals of length $q_i^{-1}$, as desired.

Now we need to compare between $\frac{s_1+s_2+s_3+t}{3}$ and the upper bound in Proposition \ref{prop-prod-direction} with $t=t_1+t_2$. It turns out the only possibility that Proposition \ref{prop-prod-direction} wins is the case $t_2=1$. And the upper bound is the function $f$ as stated in the proposition.

\end{proof}

\section{Fourier dimension of Diophantine approximation}\label{sec-Fourier}
In this Section we prove Theorem \ref{thm-Fourier-dimension}. We may assume $\gamma>0$ as we have explained in the introduction that the case $\gamma=0$ has Fourier dimension zero and the Frostman measure has been constructed in Section \ref{subsec-gamma=0}. There will be three steps. First we construct a measure with desired Fourier decay, then we show no measure has faster Fourier decay, finally we show the measure from the first step also satisfies the Frostman condition. Some techniques are inspired by previous work. We refer to \cite{Wol03} for the classical
and \cite{FHR25} for a more recent version.

We shall prove a more general result than what Theorem \ref{thm-Fourier-dimension} is required. For $\kappa\in[0,\gamma]$, let
\begin{equation}
	\label{def-mathcal-P-i-kappa}
	\mathcal{P}_i^{\kappa,\gamma}:=\{\text{primes in }[q_i^\kappa/2, q_i^\gamma]\}.
\end{equation}
In particular, $\mathcal{P}_i^{\gamma,\gamma}=\mathcal{P}_i^\gamma$ defined in \eqref{def-mathcal-P-i}. This generalization is not necessary for Theorem \ref{thm-Fourier-dimension}, but it helps in the proof of Theorem \ref{thm-non-geo} in Section \ref{sec-dim-measure}. 

\subsection{Construct a measure with Fourier decay}\label{subsec-construct-a-measure}
We shall construct a nonzero finite Borel measure $\mu$ on 
$$\bigcap_i\bigcup_{p\in\mathcal{P}_i^{\kappa,\gamma}}\N_{q_i^{-1}}\left(\frac{\Z\backslash p\Z}{pq_i^{\beta}}\right)\cap[0,1]$$
with $|\hat{\mu}(k)|\lesssim_\epsilon |k|^{-\gamma+\epsilon}$, $|k|\in\Z\backslash\{0\}$. Then $|\hat{\mu}(\xi)|\lesssim_\epsilon |\xi|^{-\gamma+\epsilon}$ follows immediately (see, for example, Lemma 9.4.A in \cite{Wol03}). Notice that the decay exponent is independent in $\kappa$ and $\beta$.

Let $\phi\in C_0^\infty((-1,1))$ be nonnegative with $\int\phi=1$. Assume $q_i^\beta\in\Z$ for all $q_i$. For each prime $p$ we define
$\phi_{i,p}(x)$ as the ``modified" periodization of $p^{-1}q_i^{1-\beta}\phi(p^{-1}q_i^{1-\beta}x)$, that is
\begin{equation}\label{def-phi-i-p}\phi_{i,p}(x):=\sum_{v\in\Z\backslash p\Z}p^{-1}q_i^{1-\beta}\phi(p^{-1}q_i^{1-\beta}(x-v)).\end{equation}
In fact, for Fourier decay we do not have to exclude $p\Z$, but it is necessary for the Frostman condition. See Section \ref{subsec-Frostman} below.

Notice $\phi_{i,p}$ is $p$-periodic and has Fourier expansion $$\phi_{i,p}(x)=\sum_{n\in\Z}\hat{\phi}(pq_i^{\beta-1}n) e^{2\pi i nx}-p^{-1}\sum_{m\in\Z}\hat{\phi}(q_i^{\beta-1}m)e^{2\pi i mx/p}.$$ 
Rescale $\phi_{i,p}$ to
        \begin{equation}\label{def-Phi-i-p}\begin{aligned}\Phi_{i,p}(x):=&\phi_{i,p}(pq_i^\beta x)\\=&\sum_{v\in\mathbb{Z}\backslash p\Z}p^{-1}q_i^{1-\beta}\phi(q_i(x-\frac{v}{pq_i^\beta}))\\=&\sum_{n\in\Z}\hat{\phi}(pq_i^{\beta-1}n) e^{2\pi i pq_i^\beta n x}-p^{-1}\sum_{m\in\Z}\hat{\phi}(q_i^{\beta-1}m)e^{2\pi i q_i^\beta mx}.\end{aligned}\end{equation}
Then $\Phi_{i,p}$ is smooth on $\N_{q_i^{-1}}(\frac{\Z}{pq_i^\beta})$, $1$-periodic, and after restriction onto $[0,1]$ it has Fourier coefficients 
    \begin{align}\label{Fourier coefficient of phi}
        \widehat{\Phi_{i,p}}(k)=
        \begin{cases}
            (1-p^{-1})\hat{\phi}(q_i^{-1}k), & k\in pq_i^{\beta}\Z\\
            -p^{-1}\hat{\phi}(q_i^{-1}k), & k\in q_i^\beta\Z\backslash pq_i^\beta\Z\\
            0, & \mathrm{otherwise}
        \end{cases}.
    \end{align}
    In particular $\int_0^1\Phi_{i,p}=\widehat{\Phi_{i,p}}(0)=1-p^{-1}$.
    
    Then
    \begin{equation}\label{def-F-i}
        F_{i}(x):=\frac{1}{\#\mathcal{P}_i^{\kappa,\gamma}}\sum_{p\in\mathcal{P}_i^{\kappa,\gamma}}\frac{p}{p-1}\Phi_{i,p}(x)
    \end{equation}
    is smooth on \begin{equation}\label{supp-F-i}\bigcup_{p\in\mathcal{P}_i^{\kappa,\gamma}}\mathcal{N}_{q_i^{-1}}(\frac{\Z\backslash p\Z}{pq_i^\beta})\cap[0,1]\end{equation} and $\widehat{F_i}(0)=\int_0^1 F_i=1$. For $k\neq 0$, $\widehat{F_i}(k)$ equals
    $$
        (\#\mathcal{P}_i^{\kappa,\gamma})^{-1}\left(\#\{p\in\mathcal{P}_i^{\kappa,\gamma}: k\in pq_i^\beta\Z\}-\sum_{p\in\mathcal{P}_i^{\kappa,\gamma}: k\in q_i^\beta\Z\backslash pq_i^\beta\Z}\frac{1}{p-1}\right)\hat{\phi}(q_i^{-1}k).$$
    Then, by the prime number theorem, the trivial prime divisor bound
    $$\#\{p\in\mathcal{P}_i^{\kappa,\gamma}: pq_i^\beta\mid k\}\leq \frac{\log (|k|q_i^{-\beta})}{1+\log (q_i^\kappa/2)},$$
    and the fast decay of $\hat{\phi}(q_i^{-1}k)$, it follows that
    \begin{equation}\label{Fourier coefficient of Fi neq 0}\begin{aligned}
        |\widehat{F_i}(k)|\leq & C\frac{\log q_i^{\gamma}}{q_i^{\gamma}}\cdot\left(\frac{\log (|k| q_i^{-\beta})}{1+\log q_i^{\kappa}}+1\right)\cdot |\hat{\phi}(q_i^{-1}k)|\\\leq & C_{N}\frac{\log |k|+\log q_i}{q_i^{\gamma}}(1+\frac{|k|}{q_i})^{-N},\ |k|\neq 0.
    \end{aligned}
    \end{equation}
    Here and throughout this subsection, all constants $C, C_N, C_\phi$ may vary from line to line, may depend on $N, \kappa, \gamma, \beta, \phi, q_1$, but must be independent in $i,k$ and the choice of $q_i, i\geq 2$ under the condition 
    \begin{equation}\label{increasing-and-q-1-large}q_i>q_{i-1}^{10 i/\gamma} \text{ and } q_1>C_\phi.\end{equation}
     
    One can already see from \eqref{Fourier coefficient of Fi neq 0} that $\kappa$ and $\beta$ make no contribution to the Fourier decay exponent.

    Although $F_i$ seems to have desired support and Fourier decay, its weak limit is the Lebesgue measure due to $\widehat{F_i}\to \delta_0$. To overcome this difficulty, we need to take their product. The key Lemma is the following. 

\begin{lem}\label{stability}
Suppose $\psi\in C^\infty([0,1])$. Then
        $$|\widehat{\psi F_i}(k)-\widehat{\psi}(k)|\leq C \|\psi\|\cdot
        \begin{cases}
            q_i^{-\gamma}\log q_i, & |k|\leq q_i\\
            |k|^{-\gamma}\log|k|, & |k|\geq q_i
        \end{cases},$$
    where in this subsection \begin{equation}\label{norm-psi-1}\|\psi\|:=|\hat{\psi}(0)|+\sum|\hat{\psi}(l)||l|^\gamma.\end{equation}
\end{lem}
Notice $\|\psi\|\leq \|\psi\|_{L^\infty}+(2\pi)^2\|\psi''\|_{L^\infty}\sum_{l\in\Z\backslash\{ 0\}}|l|^{-2+\gamma}$ for later use.
\begin{proof}[Proof of Lemma \ref{stability}]
    As $\widehat{F_i}(0)=1$,
    \begin{align*}
        \widehat{\psi F_i}(k)-\widehat{\psi}(k)=\sum_{l\in\mathbb{Z}}\widehat{\psi}(k-l)\widehat{F_i}(l)-\widehat{\psi}(k)
        =\sum_{l\neq0}\widehat{\psi}(k-l)\widehat{F_i}(l).
    \end{align*}
    
    When $|k|\leq q_i$, by \eqref{Fourier coefficient of Fi neq 0} with $|\hat{\phi}|\leq 1$,
    $$\left|\sum_{l\neq0}\hat{\psi}(k-l)\widehat{F_i}(l)\right|\leq C \sum_{l\neq 0}|\hat{\psi}(k-l)|\cdot \frac{\log q_i+\log|l|}{q_i^\gamma}.$$
    For $|k-l|> |l|/2$, it is
    $$\leq C\sum_{|k-l|\neq 0}|\hat{\psi}(k-l)|\cdot\frac{\log q_i+\log 2|k-l|}{q_i^\gamma}\leq C\|\psi\|\cdot q_i^{-\gamma}\log q_i.$$
    For $|k-l|\leq |l|/2$, due to $|l|\approx |k|\leq q_i$ it is
    $$\leq C\|\hat{\psi}\|_{l^1}\cdot q_i^{-\gamma}\log q_i\leq C\|\psi\|\cdot q_i^{-\gamma}\log q_i.$$
    
    From now we assume $|k|\geq q_i$ and write this sum as
    $$\sum_{l\neq 0: |k-l|>|k|/2}+\sum_{l\neq 0: |k-l|\leq |k|/2}:=I+II.$$
    It is easy to estimate $I$: in this case $1\leq 2 |k|^{-1}|k-l|$, so, by $|\widehat{F_i}|\leq 1$,
    $$\sum_{l\neq0: |k-l|>|k|/2}|\widehat{\psi}(k-l)||\widehat{F_i}(l)|\leq \sum_{|k-l|>|k|/2}|\widehat{\psi}(k-l)|\leq  C\|\psi\|\cdot |k|^{-\gamma}.
    $$
    For $II$, in this case $q_i/2<|k|/2\leq |l|\leq 3|k|/2$, so by \eqref{Fourier coefficient of Fi neq 0} with $N=\gamma$,
    $$\begin{aligned}
        &\sum_{l\neq0: |k-l|>|k|/2}|\widehat{\psi}(k-l)||\widehat{F_i}(l)|\\\leq &C\sum_{|k|/2\leq |l|\leq 3|k|/2}|\widehat{\psi}(k-l)| \cdot\frac{\log|k|}{q_i^\gamma}\cdot(\frac{|l|}{q_i})^{-\gamma}\\\leq &C \|\hat{\psi}\|_{l^1(\Z)}\cdot |k|^{-\gamma}\log|k|\leq C\|\psi\|\cdot |k|^{-\gamma}\log|k|.
    \end{aligned}$$
\end{proof}

   Now we take $G_0=\chi_{[0,1]}$ and $G_m=\prod_{i=1}^mF_i$, with
   \begin{equation}\label{supp-G-m}\supp (G_m)\subset \bigcap_{i=1}^m\bigcup_{p\in\mathcal{P}_i^{\kappa,\gamma}}\mathcal{N}_{q_i^{-1}}(\frac{\Z\backslash p\Z}{pq_i^\beta})\cap[0,1]\end{equation}
   due to the support of $F_i$ in \eqref{supp-F-i}. 
   
   Applying Lemma \ref{stability} with $\psi=G_m$, we have, for all $m\geq 0$,
   \begin{equation}\label{apply-stability}
       |\widehat{G_{m+1}}(k)-\widehat{G_m}(k)|\leq C
        \|G_m\|\cdot \begin{cases} q_{m+1}^{-\gamma}\log q_{m+1}, & |k|\leq q_{m+1}\\
            |k|^{-\gamma}\log|k|, & |k|\geq q_{m+1}
        \end{cases},
   \end{equation}
   where $\|G_m\|$ is defined as in \eqref{norm-psi-1}. By our construction of $F_i$ in \eqref{def-F-i} and \eqref{def-Phi-i-p}, 
    \begin{equation}\label{L-infty-F-i}\begin{aligned}\|F_i\|_{L^\infty}\leq &\max_{p\in\mathcal{P}_i^{\kappa,\gamma}}\|\phi_{i,p}\|_{L^\infty}\leq 2\|\phi\|_{L^\infty}\cdot q_i^{1-\kappa-\beta},\\\|F_i''\|_{L^\infty}\leq &\max_{p\in\mathcal{P}_i^{\kappa,\gamma}}\|\phi_{i,p}''\|_{L^\infty}\leq 2\|\phi''\|_{L^\infty}\cdot q_i^{3-\kappa-\beta}.\end{aligned}\end{equation}
    Therefore for all $m\geq 1$,
    $$\|G_m\|=\|\prod_{i=1}^m F_i\|\leq \|(\prod_{i=1}^m F_i)\|_{L^\infty}+C\|(\prod_{i=1}^m F_i)''\|_{L^\infty}\leq m^2 C_\phi^m\prod_{i=1}^m q_i^{3-\kappa-\beta},$$
   which is $\leq q_{m+1}^{\min\{\frac{1}{m}, \frac{\gamma}{2}\}}$ if $q_i$ is increasing rapidly satisfying \eqref{increasing-and-q-1-large}. 
   Then \eqref{apply-stability} becomes
   \begin{equation}\label{hat-G-m+1-G-m}|\widehat{G_{m+1}}(k)-\widehat{G_m}(k)|\leq C
        \begin{cases} q_{m+1}^{-\gamma+\min\{\frac{1}{m}, \frac{\gamma}{2}\}}\log q_{m+1}, & |k|\leq q_{m+1}\\
            q_{m+1}^{\min\{\frac{1}{m}, \frac{\gamma}{2}\}} |k|^{-\gamma}\log|k|, & |k|\geq q_{m+1}
        \end{cases},\end{equation}
        where the constant $C$ is independent on $k, m$, and the choice of $q_i$ under \eqref{increasing-and-q-1-large}.

        With \eqref{hat-G-m+1-G-m} in hand we can construct a desired measure $\mu$.
    
    First, as $\widehat{G_1}(0)=\widehat{F_1}(0)=1$,
     $$|\widehat{G_{m+1}}(0)-1|\leq\sum_{i=1}^m |\widehat{G_{i+1}}(0)-\widehat{G_i}(0)|\leq C \sum_{i=1}^\infty q_{i+1}^{-\gamma+\min\{\frac{1}{i}, \frac{\gamma}{2}\}}\log q_{i+1}.$$
     As $C$ is uniform for any sequence $\{q_i\}$ satisfying \eqref{increasing-and-q-1-large}, the right hand side is $<1/2$ when $q_1$ is large enough, which implies
     $$1/2\leq |\widehat{G_m}(0)|=|\int_0^1 G_m|\leq 3/2.$$
     Consequently there exists a subsequence $G_{m_j}$ whose weak limit $\mu=\lim G_{m_j}$ is a nonzero finite Borel measure on $[0,1]$. As $\supp (G_m)$ in \eqref{supp-G-m} is decreasing in $m$,
     $$\supp\mu\subset\lim_{m\rightarrow\infty}\supp (G_m)\subset \bigcap_i\bigcup_{p\in\mathcal{P}_i^{\kappa,\gamma}}\N_{q_i^{-1}}\left(\frac{\Z\backslash p\Z}{pq_i^{\beta}}\right).$$
     In fact, $\mu$ is the weak limit of $G_m$ because $G_m$ is nonnegative and $\{\widehat{G_m}(k)\}_m$ is a Cauchy sequence for every $k$. More precisely, $\mu=\lim G_{m_j}$ and $\{\widehat{G_m}(k)\}_m$ is Cauchy imply
     $$\hat{\mu}(k)=\lim_{m\rightarrow\infty} \widehat{G_m}(k),\ \forall\,k\in\Z,$$
     and therefore for every $1$-periodic smooth function $\phi$,
     $$\int \phi\,d\mu=\sum_{k\in\Z}\hat{\phi}(k)\overline{\hat{\mu}(k)}=\lim_{m\rightarrow\infty} \sum_{k\in\Z}\hat{\phi}(k)\overline{\widehat{G_m}(k)}=\lim_{m\rightarrow\infty} \int \phi(x)G_m(x)\,dx.$$
     Finally, for every $1$-periodic continuous function $f$, for every $\epsilon>0$ there exists a $1$-periodic smooth $\phi$ such that $|f-\phi|\leq \epsilon$. Hence
     $$\left|\int f\,d\mu-\int f(x)G_m(x)\,dx\right|$$
     $$\leq \int |f-\phi|\,d\mu+\left|\int \phi\,d\mu-\int \phi(x)G_m(x)\,dx\right|+\int |f(x)-\phi(x)|G_m(x)dx$$
     whose $\limsup$ is $\leq 4\epsilon$ as $m\rightarrow\infty$.
     
     It remains to show $$|\hat{\mu}(k)|\lesssim_{\epsilon, \{q_i\}} |k|^{-\gamma+\epsilon}, \,k\neq 0.$$ For every $q_{m+1}\geq |k|$ we write
     $$|\widehat{G_{m+1}}(k)|=|\widehat{G_{m+1}}(k)-\widehat{G_0}(k)|\leq\sum_{i=0}^m |\widehat{G_{i+1}}(k)-\widehat{G_i}(k)|=\sum_{q_{i+1}\leq |k|}+\sum_{q_{i+1}\geq|k|}$$
     and by \eqref{hat-G-m+1-G-m} it is
     $$\leq C|k|^{-\gamma}\log|k|\sum_{q_{i+1}\leq |k|}q_{i+1}^{\frac{1}{i}}+C\sum_{q_{i+1}\geq |k|}q_{i+1}^{-\gamma+\frac{1}{i}}\log q_{i+1}\lesssim_{\epsilon, \{q_i\}}|k|^{-\gamma+\epsilon}$$
     as $q_i$ increases rapidly. This completes the first step of the proof of Theorem \ref{thm-Fourier-dimension}.
\subsection{No measure has faster Fourier decay}\label{sub-contradiction-on-Fourier-decay}
We may assume $\beta>0$, otherwise it is trivial because of $\dF\leq \dH$. Let the sequence $\{q_i\}$ be as the previous subsection. Suppose there exists a finite Borel measure $\mu$ supported on
$$E:=\begin{cases}
	    \bigcap_i\bigcup_{1\leq H\leq q_i^\gamma}\N_{q_i^{-1}}\left(\frac{\Z}{Hq_i^{\beta}}\right), & \text{if } 2\gamma+\beta<1\\
     \bigcap_i\bigcup_{1\leq H\leq q_i^\gamma, prime}\N_{q_i^{-1}}\left(\frac{\Z}{Hq_i^{\beta}}\right), & \text{if } 2\gamma+\beta=1
	\end{cases}$$
 with
 $$|\hat{\mu}(\xi)|\lesssim |\xi|^{-\gamma'}$$
 for some $\gamma'>\gamma$. We shall find a subsequence $q_{i_j}$ and construct a measure $\nu$ supported on
$$E':=\begin{cases}
	    \bigcap_j\bigcup_{1\leq H\leq q_{i_j}^\gamma}\N_{2q_{i_j}^{-(1-\beta)}}\left(\frac{\Z}{H}\right), & \text{if } 2\gamma+\beta<1\\
     \bigcap_j\bigcup_{1\leq H\leq q_{i_j}^\gamma, prime}\N_{2q_{i_j}^{-(1-\beta)}}\left(\frac{\Z}{H}\right), & \text{if } 2\gamma+\beta=1
	\end{cases}$$
 satisfying
 $$|\hat{\nu}(\xi)|\lesssim |\xi|^{-\frac{\gamma'}{1-\beta}}.$$
This is absurd: when $2\gamma+\beta<1$ this implies $\dF E'\geq\min\{\frac{2\gamma'}{1-\beta}, 1\}>\frac{2\gamma}{1-\beta}=\dH E'$; when $2\gamma+\beta=1$ this implies $E'$ has positive Lebesgue measure. Both are contradictions.

Now we construct $\nu$. We may assume $\supp\mu\subset(0,1)$ as a smooth cutoff preserves Fourier decay. Also we only deal with the case $2\gamma+\beta<1$ because there is no difference for $2\gamma+\beta=1$ in this step.

Let $\phi\in C_0^\infty([-1,1])$, nonnegative and $\int\phi=1$. Denote $\phi_i(x):=q_i^{-1}\phi(q_ix)$. First consider the $q_i^{-1}$-localizaton of $\mu$, i.e. $\mu*\phi_i$ supported on $[0,1]$. Then rescale it to $[0, q_i^\beta]$, i.e. $q_i^{-\beta}\mu*\phi_i(q_i^{-\beta}\cdot)$. Finally take $F_i$ to be its $1$-periodization, i.e.
$$F_i(x)=\sum_{v\in\Z}q_i^{-\beta}\mu*\phi_i(q_i^{-\beta}(x-v)).$$
Then $F_i$ is $1$-periodic,
$$\supp F_i\subset\bigcup_{1\leq H\leq q_i^\gamma}\N_{2q_i^{-(1-\beta)}}\left(\frac{\Z}{H}\right),$$
and it is straightforward to check that after restriction onto $[0,1]$ its Fourier coefficients are
$$\widehat{\mu*\phi_i}(q_i^\beta k)=\hat{\mu}(q_i^\beta k)\hat{\phi}(q_i^{-1+\beta} k).$$
In particular we have 
\begin{equation}\label{hat-F-i-0-sec-4.2}\widehat{F_i}(0)=1\end{equation}
and for $|k|\neq 0$,
\begin{equation}\label{hat-F-i-neq-0-sec-4.2}|\widehat{F_i}(k)|\leq C_\mu (q_i^{\beta} |k|)^{-\gamma'}|\hat{\phi}(q_i^{-1+\beta}k)|\leq C_{\mu, N}\, (q_i^{\beta} |k|)^{-\gamma'}(1+\frac{|k|}{q_i^{1-\beta}})^{-N}\end{equation}
by the given Fourier decay of $\mu$ and the fast decay of $\hat{\phi}$. Here and throughout this subsection, all constants $C, C_\mu, C_{\mu, N}$ may vary from line to line, may depend on $\mu, N, \gamma', \beta, \phi, q_{i_1}$, but must be independent in $i,k$ and the choice of $q_{i_j}, j\geq 2$ satisfying
\begin{equation}\label{increasing-and-q-1-large-sec-4.2}q_{i_{j+1}}>q_{i_j}^{10jC_{\gamma',\beta}},\ q_{i_1}>C_0.\end{equation}

Now the situation is quite similar to the previous subsection: $F_i$ has desired support and desired Fourier decay, while $F_i\rightarrow\delta_0$. So again we take their product. The key lemma analogous to Lemma \ref{stability} is the following.

\begin{lem}\label{stability-lem-4.2}
    Suppose $\psi\in C^\infty([0,1])$. Then
    \begin{align}
        |\widehat{\psi F_i}(k)-\widehat{\psi}(k)|\leq C \|\psi\|\cdot
        \begin{cases}
            q_i^{-\beta\gamma'}(1+|k|)^{-\gamma'}, & |k|\leq q_i^{1-\beta}\\
            |k|^{-\frac{\gamma'}{1-\beta}}, & |k|\geq q_i^{1-\beta}
        \end{cases},
    \end{align} 
    where in this subsection
    \begin{equation}\label{norm-psi-2}\|\psi\|:=|\hat{\psi}(0)|+\sum|\hat{\psi}(l)||l|^\frac{\gamma'}{1-\beta}.\end{equation}
\end{lem}
The proof is similar to Lemma \ref{stability} but one needs to be careful because the behavior of $\widehat{F_i}$ is not the same. 
\begin{proof}[Proof of Lemma \ref{stability-lem-4.2}]
The first step is again to write
\begin{align*}
        \widehat{\psi F_i}(k)-\widehat{\psi}(k)=\sum_{l\in\mathbb{Z}}\widehat{\psi}(k-l)\widehat{F_i}(l)-\widehat{\psi}(k)
        =\sum_{l\neq0}\widehat{\psi}(k-l)\widehat{F_i}(l).
    \end{align*}
Then we directly split the sum into
$$\sum_{l\neq 0: |k-l|>|k|/2}+\sum_{l\neq 0: |k-l|\leq |k|/2}:=I+II.$$

For $I$: in this case $1\leq 4 (1+|k|)^{-1}|k-l|$. By \eqref{hat-F-i-neq-0-sec-4.2} with $|\hat{\phi}|\leq 1$ we have $|\widehat{F_i}(l)|\leq C (q_i^\beta l)^{-\gamma'}\leq C q_i^{-\beta\gamma'}$ for all $l\neq 0$. Therefore
    $$\begin{aligned}&\sum_{l\neq0: |k-l|>|k|/2}|\widehat{\psi}(k-l)||\widehat{F_i}(l)|\\\leq & C q_i^{-\beta\gamma'}\sum_{|k-l|>|k|/2}|\widehat{\psi}(k-l)|\\ \leq & C q_i^{-\beta\gamma'}(1+|k|)^{-\frac{\gamma'}{1-\beta}}\sum_{|k-l|>|k|/2}|\widehat{\psi}(k-l)||k-l|^{\frac{\gamma'}{1-\beta}}\\ \leq & C\|\psi\|\cdot q_i^{-\beta\gamma'}(1+|k|)^{-\frac{\gamma'}{1-\beta}},\end{aligned}
    $$
desired for both $|k|\leq q_i^{1-\beta}$ and $|k|\geq q_i^{1-\beta}$. This also settles the case $k=0$.

For $II$, in this case $0<|k|/2\leq |l|\leq 3|k|/2$. When $0<|k|\leq q_i^{1-\beta}$, by \eqref{hat-F-i-neq-0-sec-4.2} with $|\hat{\phi}|\leq 1$ we have
$$\sum_{l\neq0: |k-l|\leq|k|/2}|\widehat{\psi}(k-l)||\widehat{F_i}(l)|\leq C q_i^{-\beta\gamma'}\sum_{|l|\approx|k|}|\widehat{\psi}(k-l)||l|^{-\gamma'}\leq C\|\psi\| q_i^{-\beta\gamma'}|k|^{-\gamma'}.$$
When $|k|\geq q_i^{1-\beta}$, by \eqref{hat-F-i-neq-0-sec-4.2} with $N=\frac{\beta\gamma'}{1-\beta}$ we have
$$\begin{aligned}\sum_{l\neq0: |k-l|\leq|k|/2}|\widehat{\psi}(k-l)||\widehat{F_i}(l)|\leq C  & \sum_{|l|\approx|k|}|\widehat{\psi}(k-l)|q_i^{-\beta\gamma'}|l|^{-\gamma'}(\frac{|k|}{q_i^{1-\beta}})^{-\frac{\beta\gamma'}{1-\beta}}\\\leq C &\|\psi\|\cdot |k|^{-\frac{\gamma'}{1-\beta}}.\end{aligned}$$
\end{proof}

Now, let $q_{i_j}$ be a subsequence of $q_i$, take $G_0=\chi_{[0,1]}$ and $G_m=\prod_{j=1}^mF_{i_j}$. By Lemma \ref{stability-lem-4.2} with $\psi=G_m$, we have, for all $m\geq 0$,
   \begin{equation}\label{apply-stability-lem-4.2}
       |\widehat{G_{m+1}}(k)-\widehat{G_m}(k)|\leq C
        \|G_m\|\cdot \begin{cases} q_{i_{m+1}}^{-\beta\gamma'}(1+|k|)^{-\gamma'}, & |k|\leq q^{1-\beta}_{i_{m+1}}\\
            |k|^{-\frac{\gamma'}{1-\beta}}, & |k|\geq q^{1-\beta}_{i_{m+1}}
        \end{cases},
   \end{equation}
   where $\|G_m\|$ is defined as in \eqref{norm-psi-2}. 
   
   Then, similar to the proof of Lemma \ref{stability},
    $$\|G_m\|=\|\prod_{j=1}^m F_{i_j}\|\leq \|(\prod_{i=j}^m F_i)\|_{L^\infty}+C\|\partial^{[\frac{\gamma'}{1+\beta}]+2}(\prod_{i=j}^m F_{i_j})\|_{L^\infty}\leq C_0^m\prod_{j=1}^m q_{i_j}^{C_{\gamma',\beta}}.$$
    As $C_0$ is independent in the choice of $q_{i_j}$,
    $$\|G_m\|\leq q_{i_{m+1}}^{\min\{\frac{1}{m}, \frac{\beta\gamma'}{2}\}},\ \forall\,m\geq 1,$$
    for every choice of $q_{i_j}$ satisfying \eqref{increasing-and-q-1-large-sec-4.2}.
    
Then \eqref{apply-stability-lem-4.2} becomes
   \begin{equation}\label{hat-G-m+1-G-m-sec-4.2}
       |\widehat{G_{m+1}}(k)-\widehat{G_m}(k)|\leq C
        \begin{cases} q_{i_{m+1}}^{-\beta\gamma'+\min\{\frac{1}{m},\frac{\beta\gamma'}{2}\}}(1+|k|)^{-\gamma'}, & |k|\leq q^{1-\beta}_{i_{m+1}}\\
            q_{i_{m+1}}^{\frac{1}{m}}|k|^{-\frac{\gamma'}{1-\beta}}, & |k|\geq q^{1-\beta}_{i_{m+1}}
        \end{cases},
   \end{equation}
   where the constant $C$ is uniform in any sequence $\{q_{i_j}\}$ satisfying \eqref{increasing-and-q-1-large-sec-4.2}.

   The rest is nothing different from the previous subsection. So we omit some details. First, since
     $$|\widehat{G_{m+1}}(0)-1|\leq\sum_{i=1}^m |\widehat{G_{i+1}}(0)-\widehat{G_i}(0)|\leq C \sum_{i=1}^\infty q_{i+1}^{-\beta\gamma'+\min\{\frac{1}{i}, \frac{\beta\gamma'}{2}\}},$$
   we can choose $\{q_{i_j}\}$ properly to ensure the existence of a subsequence of $G_m$ whose weak limit $\nu$ is nontrivial and supported on $E'$ as expected. To see its Fourier decay, for every $q^{1-\beta}_{i_{m+1}}\geq |k|$ we write
   $$|\widehat{G_{m+1}}(k)|=|\widehat{G_{m+1}}(k)-\widehat{G_0}(k)|\leq\sum_{j=0}^m |\widehat{G_{j+1}}(k)-\widehat{G_j}(k)|=\sum_{q^{1-\beta}_{i_{j+1}}\leq |k|}+\sum_{q^{1-\beta}_{i_{j+1}}\geq|k|}$$
     and by \eqref{hat-G-m+1-G-m-sec-4.2} it is
     $$\leq C\left(|k|^{-\frac{\gamma'}{1-\beta}}\sum_{q^{1-\beta}_{i_{j+1}}\leq |k|}q_{i_{j+1}}^{\frac{1}{j}}\right)+C\left((1+|k|)^{-\gamma'}\sum_{q^{1-\beta}_{i_{j+1}}\geq |k|}q_{i_{j+1}}^{-\beta\gamma'+\frac{1}{j}}\right)$$ $$\lesssim_{\epsilon, \{q_{i_j}\}}|k|^{-\frac{\gamma'}{1-\beta}+\epsilon},$$
     as desired. This completes the second step of the proof of Theorem \ref{thm-Fourier-dimension}.

\subsection{The Frostman condition}\label{subsec-Frostman}

Denote 
$$s:=\kappa+\gamma+\beta.$$
In this subsection we show the measure $\mu$ constructed in Section \ref{subsec-construct-a-measure} satisfies the Frostman condition
\begin{equation}
\label{Frostman-kappa}
	\mu(B(x,r))\lesssim_{\epsilon} r^{s-\epsilon},\ \forall\,x\in\R, r>0.
\end{equation}
For Theorem \ref{thm-Fourier-dimension}, it suffices to take $\kappa=\gamma$. A general $\kappa\in(0,\gamma]$ shall help in the proof of Theorem \ref{thm-non-geo} in Section \ref{sec-dim-measure}.

Fix a $\psi\in C_0^\infty((-1,1))$, nonnegative. It suffices to prove
$$\int \psi(\frac{x-y}{r})\,d\mu(y)\lesssim_\epsilon r^{s-\epsilon},\ \forall\,x\in\R, r>0.$$
Notice that
\begin{equation}\label{ball-condition-Fourier-side}\int \psi(\frac{x-y}{r})\,d\mu(y)=\int e^{2\pi ix\xi }r\hat{\psi}(r\xi)\hat{\mu}(\xi)\,d\xi.\end{equation}

Recall $\mu$ is the weak limit of $G_m$. We claim that when $q_m\geq r^{-1}$, the difference between \eqref{ball-condition-Fourier-side} and 
$$\int e^{2\pi ix\xi }r\hat{\psi}(r\xi)\widehat{G_m}(\xi)\,d\xi$$
is negligible. To see this, consider
\begin{equation}\label{sum-m-infty}\int r|\hat{\psi}(r\xi)||\widehat{\mu}(\xi)-\widehat{G_{m}}(\xi)|\,d\xi=\lim_{m'\rightarrow\infty}\int r|\hat{\psi}(r\xi)||\widehat{G_{m'}}(\xi)-\widehat{G_{m}}(\xi)|\,d\xi.\end{equation}
For $m'>m$, write
$$\int r|\hat{\psi}(r\xi)||\widehat{G_{m'}}(\xi)-\widehat{G_{m}}(\xi)|\,d\xi\leq \sum_{j=m}^\infty \int r|\hat{\psi}(r\xi)||\widehat{G_{j+1}}(\xi)-\widehat{G_j}(\xi)|\,d\xi.$$
By \eqref{hat-G-m+1-G-m}, $\|G_{j+1}-G_j\|_{L^\infty}\leq Cq_{i+1}^{-\gamma+\min\{\frac{1}{m}, \frac{\gamma}{2}\}}\log q_i$. Therefore \eqref{sum-m-infty} is
$$\leq C\left(\sum_{j\geq m}q_{j+1}^{-\gamma+\min\{\frac{1}{m}, \frac{\gamma}{2}\}}\log q_{j+1}\right)\int r|\hat{\psi}(r\xi)|\,d\xi\leq q_m^{-1}<r\ll r^{s-\epsilon}$$
when $q_i$ is increasing fast enough.

From the discussion above it suffices to show
$$\int_{B(x,r)} G_{m_0}(y)\,dy\lesssim_\epsilon r^{s-\epsilon}$$
for some $m_0$ with $q_{m_0}\geq r^{-1}$ that will be chosen later. 

We need to estimate $|G_m(y)|$. The estimate \eqref{L-infty-F-i} implies that
$$|G_{m}(y)|=|\prod_{i=1}^{m}F_i(y)|\leq C_\phi^{m-2}\left(\prod_{i=1}^{m-2} q_i^{1-\kappa-\beta}\right)\cdot F_{m-1}(y)\cdot F_m(y),$$
while we need a more careful estimate on $F_{m-1}, F_m$ than \eqref{L-infty-F-i}. 

Thanks to the exclusion of $p\Z$ 
 in the definition of $\phi_{i,p}$ in \eqref{def-phi-i-p}, the separation 
 \begin{equation}
        \label{seperation-kappa}
        \left|\frac{m}{pq_i^\beta}-\frac{m'}{p'q_i^\beta}\right|\geq q_i^{-2\kappa-\beta}\geq q_i^{-\kappa-\gamma-\beta}=q_i^{-s}, \ \forall\,(m,p)\neq(m',p')
    \end{equation}
 implies that the supports of $\phi_{i,p}$ are disjoint between different $p\in \mathcal{P}_i^{\kappa,\gamma}$. Therefore, by the definition of $\Phi_{i,p}$ in \eqref{def-Phi-i-p} and $F_i$ in \eqref{def-F-i}, one can conclude that
 \begin{equation}\label{upper-bound-F-i-kappa}|F_i(y)|\leq \sum_{p\in\mathcal{P}_i^{\kappa,\gamma}} \frac{2\|\phi\|_{L^\infty}p^{-1}q_i^{1-\beta}}{\#\mathcal{P}_i^{\kappa,\gamma}}\cdot\chi_{\mathcal{N}_{q_i^{-1}}\left(\frac{\Z\backslash p\Z}{pq_i^\beta}\right)},\end{equation}
 where functions in the sum have disjoint supports. In particular,
 \begin{equation}
 	\label{L-infty-F-i-kappa}
 	\|F_i\|_{L^\infty}\leq C_\phi q_i^{1-\kappa-\beta}/\#\mathcal{P}_i^{\kappa,\gamma},
 \end{equation}
 which is much better than \eqref{L-infty-F-i}.
 
 Then, with \eqref{L-infty-F-i-kappa} on $F_{m-1}$ and \eqref{upper-bound-F-i-kappa} on $F_m$,
\begin{equation}\label{reduction-on-G-m-kappa}
\begin{aligned}
    &\int_{B(x,r)} G_{m}(y)\,dy\\\leq  & C_\phi^{m-2}\left(\prod_{i=1}^{m-2} q_i^{1-\kappa-\beta}\right)\|F_{m-1}\|_{L^\infty}\int_{B(x,r)} F_m(y)\,dy\\\leq & C_\phi^{m}\left(\prod_{i=1}^{m-1} q_i^{1-\kappa-\beta}\right)\frac{1}{\#\mathcal{P}_{m-1}^{\kappa,\gamma}}\int_{B(x,r)}\sum_{p\in\mathcal{P}_m^{\kappa,\gamma}} \frac{p^{-1}q_m^{1-\beta}}{\#\mathcal{P}_m^{\kappa,\gamma}}\cdot\chi_{\mathcal{N}_{q_m^{-1}}\left(\frac{\Z\backslash p\Z}{pq_m^\beta}\right)}\\=&C_\phi^{m}\left(\prod_{i=1}^{m-1} q_i^{1-\kappa-\beta}\right) \frac{1}{\#\mathcal{P}_{m-1}^{\kappa,\gamma}}\sum_{p\in\mathcal{P}_m^{\kappa,\gamma}}\frac{p^{-1}q_m^{1-\beta}}{\#\mathcal{P}_m^{\kappa,\gamma}} \left|B(x,r)\cap \mathcal{N}_{q_m^{-1}}\left(\frac{\Z\backslash p\Z}{pq_m^\beta}\right)\right|.\end{aligned}
\end{equation}

Now choose $m_0$ such that $$q_{m_0}^{-s}\leq r<q_{m_0-1}^{-s},$$
with $q_0:=1$ as convention. This is compatible with our earlier assumption $q_{m_0}\geq r^{-1}$.

There are two ways to estimate 
$$\int_{B(x,r)} G_{m_0}(y)\,dy.$$

First, by simple counting, the number of $q_{m_0}^{-1}$-intervals contained in 
$$B(x,r)\cap \mathcal{N}_{q_{m_0}^{-1}}\left(\frac{\Z\backslash p\Z}{pq_{m_0}^\beta}\right)$$ is at most $rpq_{m_0}^\beta$. Therefore one can apply \eqref{reduction-on-G-m-kappa} with $m=m_0$ to obtain
\begin{equation}
    \label{estimate-B-x-r-G-m-0-1}
    \begin{aligned}
        & \int_{B(x,r)} G_{m_0}(y)\,dy\\\leq &  C_\phi^{m_0}\left(\prod_{i=1}^{m_0-1} q_i^{1-\kappa-\beta}\right)\frac{1}{\#\mathcal{P}_{m_0-1}^{\kappa,\gamma}} \sum_{p\in\mathcal{P}_{m_0}^{\kappa,\gamma}}\frac{p^{-1}q_{m_0}^{1-\beta}}{\#\mathcal{P}_{m_0}^{\kappa,\gamma}}\cdot rpq_{m_0}^{\beta-1}\\= &C_\phi^{m_0}\left(\prod_{i=1}^{m_0-1} q_i^{1-\kappa-\beta}\right)\cdot \frac{1}{\#\mathcal{P}_{m_0-1}^{\kappa,\gamma}}\cdot r.
    \end{aligned}
\end{equation}

Second, as $r<q_{m_0-1}^{-s}$, the separation \eqref{seperation-kappa} implies that $B(x,r)$ intersects at most one $q_{m_0-1}^{-1}$-interval, lying near $p_0^{-1}q_{m_0-1}^{-\beta}\Z$ for some $p_0\in \mathcal{P}_{m_0-1}^{\kappa,\gamma}$. As the difference between 
$$\int_{B(x,q_{m_0-1}^{-1})} G_{m_0}(y)\,dy \quad \text{and} \quad \int_{B(x,q_{m_0-1}^{-1})} G_{m_0-1}(y)\,dy$$
is negligible by the discussion at the beginning of this subsection, we can apply \eqref{reduction-on-G-m-kappa} with $m=m_0-1$ to obtain
\begin{equation}
    \label{estimate-B-x-r-G-m-0-2}
    \begin{aligned}& \int_{B(x,r)} G_{m_0}(y)\,dy\sim \int_{B(x,r)} G_{m_0-1}(y)\,dy \\ \leq & C_\phi^{m_0-1}\left(\prod_{i=1}^{m_0-2} q_i^{1-\kappa-\beta}\right)\cdot \frac{1}{\#\mathcal{P}_{m_0-2}^{\kappa,\gamma}}\cdot \sum_{p=p_0}\frac{p^{-1}q_{m_0-1}^{1-\beta}}{\#\mathcal{P}_{m_0-1}^{\kappa,\gamma}}\cdot q_{m_0-1}^{-1}\\\leq &  C_\phi^{m_0}\left(\prod_{i=1}^{m_0-1} q_i^{1-\kappa-\beta}\right)\cdot \frac{1}{\#\mathcal{P}_{m_0-1}^{\kappa,\gamma}}\cdot q_{m_0-1}^{-1}.
    \end{aligned}
\end{equation}
    
Finally we interpolate between \eqref{estimate-B-x-r-G-m-0-1} and \eqref{estimate-B-x-r-G-m-0-2} with $s=\kappa+\gamma+\beta$ to obtain
$$\begin{aligned}
    \int_{B(x,r)} G_{m_0}(y)\,dy\leq & C_\phi^{m_0}\left(\prod_{i=1}^{m_0-1} q_i^{1-\kappa-\beta}\right)\cdot \frac{1}{\#\mathcal{P}_{m_0-1}^{\kappa,\gamma}}\cdot r^{s-\frac{1}{m_0}}q_{m_0-1}^{-(1-s+\frac{1}{m_0})}\\ \leq & C_\phi^{m_0}\left(\prod_{i=1}^{m_0-2} q_i^{1-\kappa-\beta}\right) q_{m_0-1}^{-\frac{1}{m_0}}\log q_{m_0-1}\cdot r^{s-\frac{1}{m_0}},
\end{aligned}$$
which is $\lesssim_\epsilon r^{s-\epsilon}$ when $q_i$ rapidly increases. This finishes the proof of \eqref{Frostman-kappa}.

By taking $\kappa=\gamma$, \eqref{Frostman-kappa} becomes
$$\mu(B(x,r))\lesssim_\epsilon r^{2\gamma+\beta-\epsilon},$$
which completes the last step of the proof of Theorem \ref{thm-Fourier-dimension} because $\dH \supp\mu=2\gamma+\beta$ by Theorem \ref{thm-our-example}.

\section{Fourier restriction: the geometric case}\label{sec-Sharpness}

In this section, we prove Theorem \ref{thm-geo}. The set $E$ and measure $\mu$ are directly from Theorem \ref{thm-Fourier-dimension} with $2\gamma+\beta=a$ and $2\gamma=b$. Then it remains to show $(ii)$ in Theorem \ref{thm-geo}.

    As $p$ also represents the prime, in this section we use $\tilde{p}$ for the $L^{\tilde{p}}$-norm.
        
    Fix $i$. For every $p\in\mathcal{P}_i^\gamma$ (a prime in $(q_i^\gamma/2, q_i^\gamma]$), recall $E_{i,p}$ defined in \eqref{def-E-i-p} is now \begin{equation}\label{def-E-i-p-d=1}E_{i,p}:=\mathcal{N}_{q_i^{-1}}\left(\frac{\Z}{pq_i^\beta}\right)\cap[0,1].\end{equation}
    
    Since
    $$\supp\mu\subset\bigcup_{p\in\mathcal{P}_i^\gamma} E_{i,p},$$
    there exists $p_0\in\mathcal{P}_i^\gamma$ such that 
    \begin{equation}\label{pigeonholing-p}\mu(E_{i,p_0})\geq c\, q_i^{-\gamma}\log q_i=c\, q_i^{-b/2}\log q_i.\end{equation}

    Let $f:=\chi_{E_{i,p_0}}$. Then \begin{equation}\label{f-L-q-b<a}\|f\|_{L^q(\mu)}=\mu(E_{i,p_0})^{1/q}.\end{equation}
    For $\widehat{f\,d\mu}$, notice that for all $x\in E_{i,p_0}$ and all $\xi\in \mathcal{N}_{\frac{1}{100}}(p_0q_i^\beta\Z\cap[-q_i, q_i])$,
    $$\|x\xi\|:=\dist(x\xi,\Z)\leq \frac{1}{10}.$$
    This implies that, for all $\xi\in \mathcal{N}_{\frac{1}{10}}(p_0q_i^\beta\Z\cap[-q_i, q_i])$,
    $$|\widehat{f\,d\mu}(\xi)|=|\int e^{-2\pi i x\xi}f(x)\,d\mu(x)|\geq \int_{E_{i,p_0}}(\cos\frac{\pi}{5}-\sin\frac{\pi}{5})\,d\mu(x)\geq \frac{1}{10}\mu(E_{i,p_0}).$$
    Therefore
    $$\|\widehat{f\,d\mu}\|_{L^{\tilde{p}}}\geq 10^{-1}\mu(E_{i,p_0})\cdot |\mathcal{N}_{\frac{1}{10}}(p_0q_i^\beta\Z\cap[-q_i, q_i])|^{1/\tilde{p}}$$ $$\geq c_{\tilde{p}}\,\mu(E_{i,p_0})\cdot q_i^{(1-\gamma-\beta)/\tilde{p}}=c_{\tilde{p}}\,\mu(E_{i,p_0})\cdot q_i^{\frac{2-2a+b}{2\tilde{p}}}.$$
    Together with \eqref{f-L-q-b<a}, it follows that
    $$\frac{\|\widehat{f\,d\mu}\|_{L^{\tilde{p}}}}{\|f\|_{L^q}}\geq c_{\tilde{p}}\,\mu(E_{i,p})^{1/q'}\cdot q_i^{\frac{2-2a+b}{2\tilde{p}}}.$$
    
     Finally, by \eqref{pigeonholing-p}, when $q_i\rightarrow\infty$,
    $$\sup_{f\in L^q(\mu)}\frac{\|\widehat{f\,d\mu}\|_{L^{\tilde{p}}}}{\|f\|_{L^q}}\geq c_{\tilde{p},q}\, q_i^{-\frac{b}{2q'}+\frac{2-2a+b}{2\tilde{p}}}(\log q_i)^{1/q'}\rightarrow\infty,$$
    as desired.

\section{The non-geometric case and dimension of measures}\label{sec-dim-measure}
In this section, we prove Theorem \ref{thm-non-geo}. Recall $\dF\mu$ is defined in \eqref{def-F-dim-measure} and $\dH\mu$ is defined in \eqref{def-H-dim-measure}.

\subsection{$b\leq a$}
Although Theorem \ref{thm-non-geo} is came up for the non-geometric case $b>a$, it is valid for $b\leq a$ as well. Take $\mu$ to be the measure from Theorem \ref{thm-Fourier-dimension} with $2\gamma+\beta=a$ and $2\gamma=b$. Both $(i)$ and $(ii)$ have been verified in the previous section. It remains to check
\begin{itemize}
    \item $\dF\mu=2\gamma$;
    \item $\dH\mu=2\gamma+\beta$.
\end{itemize}

For the Fourier dimension, $\dF\mu=2\gamma$ because $\supp\mu$ is contained in a set of Fourier dimension $2\gamma$. 

For $\dH \mu$, first notice that
$$\underline{\dim}(\mu,x)\geq 2\gamma+\beta,\ \forall\,x\in\supp\mu,$$ 
due to its Frostman condition:
\begin{equation}\label{Frostman-implies-local-dim}\mu(B(x,r))\lesssim r^a\implies \liminf_{r\rightarrow 0}\frac{\log\mu(B(x,r))}{\log r}\geq a=2\gamma+\beta.\end{equation}

On the other hand,
$$\underline{\dim}(\mu,x)\leq 2\gamma+\beta, \,\mu\text{-a.e.}$$ because of the well known property (see, e.g. Proposition 10.3 in \cite{Falconer97})
$$\esssup_{x\sim\mu}\underline{\dim}(\mu,x)=\inf\{\dH E:\mu(E^c)=0\}\leq\dH\supp\mu.$$

Hence
$$\dH\mu=\inf_{x\in\supp \mu}\underline{\dim}(\mu,x)=2\gamma+\beta,$$
as desired.

\subsection{$a<b\leq 2a$}\label{subsec-b>a}
When $a<b\leq 2a$, one cannot directly take the measure $\mu$ from Theorem \ref{thm-Fourier-dimension}. But the more general measure constructed in Section \ref{sec-Fourier} with a carefully selected $\kappa\in[0,\gamma]$ works. More precisely, we take 
$$\gamma=b/2,\  \beta\in[0,a-\gamma] \text{ arbitrary}, \ \kappa=a-\gamma-\beta.$$ 
Notice $a-\gamma=a-b/2\geq 0$ and
$$0\leq \kappa<b-\gamma-\beta=\gamma-\beta\leq \gamma.$$
So the construction in Section \ref{sec-Fourier} is valid with these parameters.

Like the previous subsection, $\dF \mu=b$ follows because $\supp\mu$ is contained in a set of Fourier dimension $b$ by Theorem \ref{thm-Fourier-dimension}. Also the Frostman condition
$$\mu(B(x,r))\lesssim_\epsilon r^{a-\epsilon} $$
holds by \eqref{Frostman-kappa} with our choice $a=\kappa+\gamma+\beta$. Consequently Theorem \ref{thm-non-geo} (i) follows.

It remains to show $\dH\mu=a$ and Theorem \ref{thm-non-geo} (ii). We need the following lemma.

\begin{prop}\label{prop-lower-Frostman-non-geometric}
Let $\mu$ be the measure constructed in this subsection and suppose the auxiliary function $\phi$ in \eqref{def-phi-i-p} is positive on $[-\frac{1}{10}, \frac{1}{10}]$. Then for every $p_0\in \mathcal{P}_i^{\kappa,\gamma}$ and every $(10q_i)^{-1}$-interval $I$ contained in
\begin{equation}\label{smaller-nhd}\mathcal{N}_{(10q_i)^{-1}}\left(\frac{\Z\backslash p_0\Z}{p_0q_i^\beta}\right) \cap \bigcap_{k=1}^{i-1}\bigcup_{p\in\mathcal{P}_k^{\kappa,\gamma}}\mathcal{N}_{(10q_k)^{-1}}\left(\frac{\Z\backslash p\Z}{pq_k^\beta}\right),\end{equation}
we have
\begin{equation}\label{lower-Frostman}\mu(I) \gtrsim_\epsilon p_0^{-1}q_i^{-\gamma-\beta-\epsilon}.\end{equation} Furthermore, for all $p\in \mathcal{P}_i^{\kappa,\gamma}$,
    \begin{equation}\label{lower-AP}\mu(E_{i,p})\gtrsim_\epsilon q_i^{-\gamma-\epsilon},\end{equation}
    where $E_{i,p}$ is defined in \eqref{def-E-i-p-d=1}.
\end{prop}

We leave the proof of Proposition \ref{prop-lower-Frostman-non-geometric} to the end of this subsection. 

Now we can prove $\dH \mu=a$. By \eqref{Frostman-implies-local-dim}, the Frostman condition $\mu(B(x,r))\lesssim_\epsilon r^{a-\epsilon}$ implies
$$\underline{\dim}(\mu,x)\geq a,\ \forall\,x\in\supp\mu.$$
On the other hand, by choosing $p_0\approx q_i^{\kappa}$ in the lower bound \eqref{lower-Frostman} in Proposition \ref{prop-lower-Frostman-non-geometric}, one can see that, for all $\epsilon>0$, there exists $x\in\supp\mu$ with
$$\underline{\dim}(\mu,x)=\liminf_{r\rightarrow 0}\frac{\log\mu(B(x,r))}{\log r}\leq a+\epsilon.$$
Together we have $\dH \mu =a$, as desired.

It remains to show Theorem \ref{thm-non-geo} (ii), the sharpness of the Fourier restriction. Notice that the estimate \eqref{lower-AP} in Proposition \ref{prop-lower-Frostman-non-geometric} coincide with \eqref{pigeonholing-p}. Also the conditions on $\tilde{p}, q$ in Theorem \ref{thm-geo} and Theorem \ref{thm-non-geo} are the same. So the argument in Section \ref{sec-Sharpness} works for Theorem \ref{thm-non-geo} (ii) as well.

Finally we prove Proposition \ref{prop-lower-Frostman-non-geometric}.
\begin{proof}[Proof of Proposition \ref{prop-lower-Frostman-non-geometric}]

As $I$ is a $(10q_i)^{-1}$-interval,
our argument in Section \ref{subsec-Frostman} implies that it is equivalent to consider
$$\int_I G_i(y)\,dy,$$
where $G_i=\prod_{k=1}^i F_k$. By \eqref{def-phi-i-p}, \eqref{def-Phi-i-p}, for every $p\in \mathcal{P}_k^{\kappa,\gamma}$,
$$\Phi_{k,p}(y)\geq c_\phi\,p^{-1}q_k^{1-\beta}\cdot \chi_{\mathcal{N}_{(10q_k)^{-1}}\left(\frac{\Z\backslash p\Z}{pq_k^\beta}\right)}.$$
Notice that every $y$ in \eqref{smaller-nhd} associates a sequence of primes $p_1^y, p_2^y,\dots p_{i-1}^y$, with $p_k^y\in (q_k^{\kappa}, q_k^{\gamma}]$.
Therefore, by the separation condition \eqref{seperation-kappa} and the definition of $F_i$ in \eqref{def-F-i}, for every $p_0\in \mathcal{P}_i^{\kappa,\gamma}$ and $y$ in \eqref{smaller-nhd}, we have
   \begin{equation}\label{lower-G-i}|G_i(y)|\geq \frac{c_\phi\,p_0^{-1}q_i^{1-\beta}}{\#\mathcal{P}_i^{\kappa,\gamma}}\cdot \prod_{k=1}^{i-1}\frac{c_\phi\,(p_k^y)^{-1}q_k^{1-\beta}}{\#\mathcal{P}_k^{\kappa,\gamma}}\gtrsim_\epsilon p_0^{-1}q_i^{1-\beta-\gamma-\epsilon}.\end{equation}

Hence, for every $(10q_i)^{-1}$-interval $I$ contained in \eqref{smaller-nhd},
$$\mu(I)= \int_I G_i+error\gtrsim_\epsilon p_0^{-1}q_i^{-\beta-\gamma-\epsilon},$$
that proves \eqref{lower-Frostman}.

It remains to show \eqref{lower-AP}. We need to the number of $(10q_i)^{-1}$-intervals in \eqref{smaller-nhd}.

As $q_k$ is a rapidly increasing sequence, every $(10q_{k-1})^{-1}$-interval in $\R$ contains at least 
$$>c q_{k-1}^{-1}pq_k^\beta\geq c q_{k-1}^{-1}q_k^{\kappa+\beta} $$
many $(10q_k)^{-1}$-intervals from
$$\mathcal{N}_{(10q_k)^{-1}}\left(\frac{\Z\backslash p\Z}{pq_k^\beta}\right).$$
By the separation condition \eqref{seperation-kappa}, after taking the union of $p\in\mathcal{P}_{k}^{\kappa,\gamma}$, one can conclude that every $(10q_{k-1})^{-1}$-interval in $\R$ contains at least 
$$>c q_{k-1}^{-1}pq_k^\beta\#(\mathcal{P}_{k}^{\kappa,\gamma})\geq c q_{k-1}^{-1}q_k^{\kappa+\beta}\#(\mathcal{P}_{k}^{\kappa,\gamma})$$ many $(10q_k)^{-1}$-intervals from
$$\bigcup_{p\in\mathcal{P}_{k}^{\kappa,\gamma}}\mathcal{N}_{(10q_k)^{-1}}\left(\frac{\Z\backslash p\Z}{pq_k^\beta}\right).$$
Therefore the number of $(10q_i)^{-1}$-intervals in \eqref{smaller-nhd} is
\begin{equation}
    \label{lower-bound-total-interval}
    \geq c^i q_{i-1}^{-1}pq_i^{\beta}\prod_{k=1}^{i-1} q^{-1}_{k-1} q_k^{\kappa+\beta}\#\mathcal{P}_k^{\kappa,\gamma}
\end{equation}
Hence by \eqref{lower-Frostman}, \eqref{lower-bound-total-interval},
$$\mu(E_{i,p})\gtrsim_\epsilon q_i^{-\gamma-\epsilon}\cdot 
c^iq_{i-1}^{-1}\prod_{k=1}^{i-1} q^{-1}_{k-1} q_k^{\kappa+\beta}\#\mathcal{P}_k^{\kappa,\gamma}
\gtrsim_\epsilon q_i^{-\gamma-\epsilon},$$
as desired.
\end{proof}

\bibliographystyle{abbrv}
\bibliography{mybibtex}

\end{document}